\newtheorem{thm}{Theorem}[section]
\newtheorem{lem}[thm]{Lemma}
\newtheorem{pro}[thm]{Proposition}
\theoremstyle{definition}
\theoremstyle{remark}
\newtheorem{rem}[thm]{Remark}
\newcommand{\C}{\mathbb{C}}
\newcommand{\R}{\mathbb{R}}
\newcommand{\Rn}{{\mathbb{R}^n}}
\newcommand{\om}{\Omega }
\newcommand{\pom}{{\partial \Omega} }
\newcommand{\omc}{ {\Omega^c} }
\newcommand{\gs}{{\Gamma_s}}
\newcommand{\gt}{{\Gamma_t}}
\newcommand{\Wgs}{{\widetilde{W}^{\frac{1}{2},2}(\gs)}}
\newcommand{\Whgs}{{\widetilde{W}_h^{\frac{1}{2},2}(\gs)}}
\newcommand{\Wg}{{W^{\frac{1}{2},2}(\pom)}}
\newcommand{\Whmg}{{W^{-\frac{1}{2},2}_h(\pom)}}
\newcommand{\Wmg}{{W^{-\frac{1}{2},2}(\pom)}}
\newcommand{\Wp}{{W^{1,p}(\om)}}
\newcommand{\Whp}{{W_h^{1,p}(\om)}}
\begin{document}

\title{Adaptive FE--BE Coupling for Strongly Nonlinear Transmission Problems with Friction II}
\author{Heiko Gimperlein\thanks{This work was supported by the Danish National Research Foundation (DNRF) through the Centre for Symmetry and Deformation and the Danish Science Foundation (FNU) through research grant 10-082866. H.~G.~thanks the Institut f\"{u}r Angewandte Mathematik for hospitality.} \and Ernst P.~Stephan}
\date{}

\maketitle \vskip 0.5cm
\begin{abstract}
\noindent This article discusses the well-posedness and error analysis of the coupling of finite and boundary elements for transmission or contact problems in nonlinear elasticity. It concerns ``pseudoplastic'', $p$--Laplacian-type Hencky materials with an unbounded stress--strain relation, as they arise in the modelling of ice sheets, non-Newtonian fluids or porous media. For $1<p<2$ the bilinear form of the boundary element method fails to be continuous in natural function spaces associated to the nonlinear operator. We propose a functional analytic framework for the numerical analysis and obtain a priori and a posteriori error estimates for Galerkin approximations to the resulting boundary/domain variational inequality. The a posteriori estimate complements recent estimates obtained for mixed finite element formulations of friction problems in linear elasticity.
\end{abstract}
\vskip 1.0cm

\section{Introduction}\label{intro}

Let $n=2$ or $3$ and $\Omega \subset \Rn$ be a bounded Lipschitz domain. We consider transmission and frictional contact problems between a nonlinear, uniformly $W^{1,p}(\Omega)$--monotone operator in $\Omega$ and the homogeneous Lam\'{e} equation in the exterior domain. Adaptive finite element / boundary element procedures provide an efficient and extensively investigated tool for the numerical solution when the nonlinear operator is uniformly elliptic \cite{stephan}. Their analysis, however, does not apply to the above ``pseudoplastic'' material laws arising in the modelling of ice sheets, non-Newtonian fluids or porous media \cite{fluid, ice}, because for $p<2$ the bilinear form of the boundary element method fails to be continuous on natural function spaces related to the nonlinear operator. This article provides a functional analytic framework to study the wellposedness and an error analysis of FE / BE coupling procedures in this situation.\\

\noindent \textbf{{Formulation of Problem:}} We consider the following contact problem for $(u,u_c) \in (W^{1,p}(\om))^n \times (W^{1,2}_{loc}(\omc))^n$, where $p\in(1,\infty)$ and $\partial\Omega = \overline{\Gamma_s \sqcup \Gamma_t}$ is decomposed into two open subsets:
\begin{eqnarray}\label{CTP}
-\mathrm{div}\, A'(\varepsilon(u))&=& f \quad \text{in $\om$,} \nonumber \\
- \mu \Delta u_c - (\lambda+\mu)\, \mathrm{grad}\, \mathrm{div}\ u_c &=& 0 \quad \text{in $\omc$,} \nonumber\\
A'(\varepsilon(u)) \nu -T^* u_c&=& t_0 \quad \text{on $\pom$,} \label{diff}\\
u-u_c&=& u_0\quad \text{on $\gt$,}\nonumber
\end{eqnarray}
and a radiation condition $u(x) = \emph{o}(1)$, $\mathrm{grad}\ u(x) =\mathcal{O}(|x|^{-1})$ resp.~$u(x) = \mathcal{O}(|x|^{-1})$, $\mathrm{grad}\ u(x) =\mathcal{O}(|x|^{-2})$ is satisfied for $n=2$, $3$ as $|x|\to \infty$.
On $\gs$ contact conditions corresponding to Tresca friction are imposed. If $\nu$ denotes the unit outer normal to $\pom$, the conditions are given in terms of the normal and tangential components of $u$, $u_n = \nu \cdot u$ and $u_t = u - u_n \nu$, and of the stress, $\sigma_n(u)= -\nu A'(\varepsilon(u))\nu$ and $\sigma_t(u) = -A'(\varepsilon(u)) \nu - \sigma_n(u) \nu$:
\begin{align*}& \sigma_n(u)\leq 0\ , \ u_{0,n}+u_{c,n}-u_n \leq 0\ , \ \sigma_n(u)(u_{0,n}+u_{c,n}-u_n)=0\ , \\
&|\sigma_t(u)|\leq \mathcal{F} \ , \ \sigma_t(u) (u_{0,t}+u_{c,t}-u_t)+\mathcal{F}|u_{0,t}+u_{c,t}-u_t|)=0\ .
\end{align*}
We have denoted the strains by $\varepsilon_{ij}(u) =\frac{1}{2}(\partial_{x_i} u_j + \partial_{x_j} u_i)$ and the natural
conormal derivative $2 \mu \partial_\nu + \lambda \nu \mathrm{div} + \mu \nu \times \mathrm{curl}$ at the boundary by $T^*$. The exterior problem is strongly elliptic provided $\mu>0$, $\lambda>-\mu$. The function $A' : L^p(\om) \otimes \R^{n\times n}_{\mathrm{sym}} \to L^{p'}(\om) \otimes \R^{n\times n}_{\mathrm{sym}}$ is assumed to be a bounded, continuous and uniformly monotone operator, so that in particular for $p\in(1, 2)$:
\begin{align}\label{upperbound1}
\langle A'(x)-A'(y), x-y \rangle &\gtrsim (\|x\|_{L^p(\om)}+ \|y\|_{L^p(\om)})^{p-2} \|x-y\|_{L^p(\om)}^2 \ ,\nonumber\\
\langle A'(x)-A'(y), z \rangle &\lesssim \|x-y\|_{L^p(\om)}^{p-1} \|z\|_{L^p(\om)}\ .
\end{align}
When $p\in[2,\infty)$, we require
\begin{align}\label{upperbound2}
\langle A'(x)-A'(y), x-y \rangle &\gtrsim \|x-y\|_{L^p(\om)}^p \ ,\nonumber\\
\langle A'(x)-A'(y), z \rangle &\lesssim (\|x\|_{L^p(\om)}+ \|y\|_{L^p(\om)})^{p-2}\|x-y\|_{L^p(\om)} \|z\|_{L^p(\om)}\ .
\end{align}
We assume $\gt\neq \emptyset$, the compatibility condition $\int_\Omega f + \langle t_0, 1 \rangle=0$ for $n=2$ and that the data belong to the following spaces:
$$f\in (L^{p'}(\Omega))^n,\,\, u_0 \in (W^{\frac{1}{2},2}(\pom))^n,\,\, t_0 \in (W^{-\frac{1}{2},2}(\pom))^n, \,\,0\leq \mathcal{F} \in L^\infty(\gs)\ .$$
In Theorem \ref{existence} we will show that Problem \eqref{CTP} admits a unique weak solution $(u_1,u_2) \in W^{1,p}(\om)^n \times W^{1,2}_{loc}(\omc)^n$.\\
Examples include, in particular, $p$-Laplacian materials with $A'(x) = |x|^{p-2}x$ as well as Carreau-type laws
$A'(x) = (|x|^{1-\delta}(1+|x|^2)^{\delta})^{\frac{p-2}{2}}x$ with $\delta \in [0,1]$. \\

For the symmetric coupling of finite and boundary elements, the Poincar\'{e}--Steklov operator $S$ of the Lam\'{e} equation on $\omc$ is used to reduce Problem \eqref{CTP} to a variational inequality in the Banach space
$$X^p=\{(u,v) \in (W^{1,p}(\om))^n \times (\widetilde{W}^{1-\frac{1}{r},r}(\gs))^n\  : u|_\pom+v\in W^{\frac{1}{2},2}(\pom)^n\}\ ,$$
where $r=\min\{p,2\}$. \\

\noindent \textbf{{Main Results:}} This article complements the analysis of \cite{scalar}, which concerned a scalar $p$--Laplacian-type problem with frictional contact in the simpler case of ``dilatant'' material laws with $2\leq p<\infty$. In \cite{scalar} numerical approximations of the variational inequality could be studied in $\widetilde{X}^p = (W^{1,p}(\om))^n \times (\widetilde{W}^{\frac{1}{2},2}(\gs))^n$, as $\widetilde{X}^p = X^p$ for $p\geq 2$, with an emphasis on the transmission problem. Numerical examples confirmed the theoretical estimates.\\

Here we show that the space $X^p$ provides the proper setting for the numerical analysis for all $p \in (1,\infty)$, and we focus on the more intricate wellposedness and a sharp error analysis of the friction problem when $p \in (1,2)$: While the a posteriori estimate in \cite{scalar} was aimed at the pure transmission problem, Theorem \ref{a posteriori} gives a sharp a posteriori estimate for the error of Galerkin approximations to the variational inequality. It complements recent results for mixed finite element formulations of friction problems \cite{hl,hw,schrod} and is new even in the elliptic case. \\
The existence of a unique $X^p$--solution is shown in Theorem \ref{existence}, and Theorem \ref{a priori} gives an a priori estimate for Galerkin approximations. Finally, in Section \ref{fulldisc} we sketch the analysis when the discretization of the Poincar\'{e}--Steklov operator is included. As an example of the added difficulty when $p \in (1,2)$, the variational inequality no longer splits into an equality on $\Omega$ and an inequality on $\pom$, unless the artificial regularity assumption $u|_\pom \in W^{\frac{1}{2},2}(\pom)^2$ is imposed.\\

The results in this article are stated for $p \in (1, \infty)$, but we refer to \cite{scalar} for most of the arguments when $p\geq 2$. Conversely, an appendix adapts the new a posteriori estimate for the frictional term to the setting considered there. \\

The mathematical differences between $p<2$ and $p\geq2$ are not artificial. They reflect the different physical behavior: While pseudoplastic materials like ice or molasses ($p<2$) get stiffer and stiffer under a smaller stress, possibly infinitely so, the opposite happens in the dilatant case like a thick emulsion of sand and water ($p>2$).

\vspace*{0.2cm}

\section{Preliminaries}

Let $\om$ be a bounded, open subset of $\Rn$ with Lipschitz boundary $\pom$. Set $p'=\frac{p}{p-1}$ whenever $p \in (1,\infty)$. We will also denote $r=\min\{p,2\}$ and $q=\max\{p,2\}$.

Before analyzing a variational formulation of (\ref{TP}), we recall some properties of $L^p$--Sobolev spaces on $\om$:

\begin{rem} \label{Wsp}
a) $\left(W^{s,p}(\pom)\right)'=W^{-s,p'}(\pom)$ and $W^{s,2}(\pom) = H^s(\pom)$.\\
b) $W^{s,2}(\om) \hookrightarrow W^{s,p}(\om)$ and $\|u\|_{W^{s,p}(\om)} \leq |\om|^{1-\frac{2}{p}}\|u\|_{W^{s,2}(\om)}$ for $1<p\leq 2$.\\
c) If $\pom$ is smooth, pseudodifferential operators of order $m$ with $\C^{k\times \tilde{k}}$--valued symbol in the H\"ormander class $S^m_{1,0}(\pom)$ map $(W^{s,p}(\pom))^{k}$ continuously to $(W^{s-m,p}(\pom))^{\tilde k}$. For Lipschitz $\pom$, at least the first--order Steklov--Poincar\'e operator $S$ of the Lam\'e operator on $\omc$ is continuous between $(W^{\frac{1}{2},2}(\pom))^{n}$ and $(W^{-\frac{1}{2},2}(\pom))^n$. \\
d) Points a) to c) imply that the quadratic form $\langle S u, u \rangle$ associated to $S$ is well-defined on $(W^{1-\frac{1}{p},p}(\pom))^n$ if $p \geq 2$. $S$ being elliptic, the form is unbounded for $p<2$ even if $\pom$ is smooth.
\end{rem}

The fundamental solution for the Lam\'e operator in $\mathbb{R}^2$,
$$G(x,y) = \frac{\lambda + 3 \mu}{4 \pi \mu(\lambda + 2 \mu)} \left\{ \log(|x-y|^{-1})\ \mathrm{Id} + \frac{\lambda + \mu}{\lambda + 3 \mu} \frac{(x-y)(x-y)^T}{|x-y|^2}\right\}\ ,$$
resp.~$\mathbb{R}^3$
$$G(x,y) = \frac{\lambda + 3 \mu}{4 \pi \mu(\lambda + 2 \mu)} \left\{ \frac{1}{|x-y|}\ \mathrm{Id} + \frac{\lambda + \mu}{\lambda + 3 \mu} \frac{(x-y)(x-y)^T}{|x-y|^3}\right\}\ ,$$
allows to define layer potentials on $\pom$ associated to the exterior problem in the usual way:
\begin{eqnarray*}
\mathcal{V} \phi(x) &=& \int_\pom \phi(x')\ G(x,x')\ dx',\\
\mathcal{K} \phi(x) &=& \int_\pom \phi(x')\ \partial_{\nu_{x'}} G(x,x') \ dx',\\
\mathcal{K}'\phi(x) &=& \int_\pom \phi(x')\
\partial_{\nu_x} G(x,x')\ dx',\\
\mathcal{W} \phi(x) &=& \partial_{\nu_x}\int_\pom \phi(x')\ \partial_{\nu_{x'}} G(x,x') \ dx' \ .
\end{eqnarray*}
They extend from $C^\infty(\pom)^n$ to a bounded map $\begin{pmatrix} -\mathcal{K} & \mathcal{V}\\
\mathcal{W} & \mathcal{K}' \end{pmatrix}$ on the Sobolev space $ W^{\frac{1}{2},2}(\pom)^n \times
W^{-\frac{1}{2},2}(\pom)^n$. If (for $n=2$) the capacity of $\pom$ is less than
$1$, which can always be achieved by scaling, $\mathcal{V}$ and
$\mathcal{W}$ considered as operators on $W^{-\frac{1}{2},2}(\pom)^n$
are selfadjoint, $\mathcal{V}$ is positive and $\mathcal{W}$
non-negative. The Steklov-Poincar\'e operator for the exterior Lam\'{e} problem is given as
$$S = \mathcal{W}+(1-\mathcal{K}')\mathcal{V}^{-1}(1-\mathcal{K}): W^{\frac{1}{2},2}(\pom)^n \subset
W^{-\frac{1}{2},2}(\pom)^n \to W^{-\frac{1}{2},2}(\pom)^n$$ and defines a
positive and selfadjoint operator with the main property
$$T^* u_2|_\pom = - S (u_2|_\pom)$$ for solutions $u_2$ of the Lam\'{e} equation on $\omc$ satisfying the decay condition at $\infty$.
$S$ therefore gives rise to a coercive and symmetric bilinear form $\langle S u , u\rangle$ on $W^{\frac{1}{2},2}(\pom)^n$. \\

Existence of a unique solution to \eqref{CTP} will be shown using Korn's inequality and coercivity:
\begin{pro}{(\cite{scalar}, Proposition 2)}\label{korn}
Assume $\om \subset \Rn$ is a bounded Lipschitz domain and $\Gamma \subset \pom$ has positive $(n-1)$--dimensional measure. Then there is a $C>0$ such that $$\|u\|_{1,p} \leq C( \|\varepsilon(u)\|_p + \|u|_{\Gamma}\|_{L^1(\Gamma)}) \quad \text{for all $u \in (W^{1,p}(\om))^n$}.$$
\end{pro}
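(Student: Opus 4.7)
The plan is a compactness/contradiction argument combining the classical second Korn inequality in $L^p$ on Lipschitz domains with the observation that a nontrivial infinitesimal rigid motion has zero set of Hausdorff dimension at most $n-2$.

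First I would invoke the second Korn inequality on the Lipschitz domain $\om$, valid for $1<p<\infty$, which provides a constant $C_0>0$ such that
$$\|u\|_{W^{1,p}(\om)} \leq C_0\bigl(\|u\|_{L^p(\om)}+\|\varepsilon(u)\|_{L^p(\om)}\bigr)$$
for all $u\in (W^{1,p}(\om))^n$. This reduces the claim to controlling $\|u\|_{L^p(\om)}$ by $\|\varepsilon(u)\|_{L^p(\om)}+\|u|_\Gamma\|_{L^1(\Gamma)}$.

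Arguing by contradiction, if no such bound existed there would be a sequence $u_k\in (W^{1,p}(\om))^n$ with $\|u_k\|_{W^{1,p}}=1$ while $\|\varepsilon(u_k)\|_{L^p(\om)}+\|u_k|_\Gamma\|_{L^1(\Gamma)}\to 0$. By Rellich--Kondrachov a subsequence converges weakly in $W^{1,p}(\om)^n$ and strongly in $L^p(\om)^n$ to a limit $u$. Applying the second Korn inequality to the differences $u_k-u_\ell$ and combining strong $L^p$ convergence with $\varepsilon(u_k)\to 0$ in $L^p$, one upgrades the weak convergence to strong convergence in $W^{1,p}(\om)^n$. In particular $\|u\|_{W^{1,p}}=1$, $\varepsilon(u)=0$ almost everywhere, and $u|_\Gamma=0$ in $L^1(\Gamma)^n$ by continuity of the trace together with the embedding $L^p(\Gamma)\hookrightarrow L^1(\Gamma)$.

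Since $\varepsilon(u)=0$, the limit is an infinitesimal rigid motion $u(x)=a+Bx$ with $a\in\R^n$ and skew-symmetric $B\in\R^{n\times n}$. Suppose $u\not\equiv 0$. Since every real skew-symmetric matrix has even rank, $\ker B$ has dimension at most $n-2$ when $B\neq 0$; the zero set $\{x:\,a+Bx=0\}$ is therefore either empty or an affine subspace of dimension at most $n-2$, hence in either case has vanishing $(n-1)$-dimensional Hausdorff measure. This contradicts the positivity of the $(n-1)$-measure of $\Gamma$, so $u\equiv 0$, contradicting $\|u\|_{W^{1,p}}=1$.

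The main obstacle is the second Korn inequality itself on a Lipschitz domain for general $p\in(1,\infty)$: this is classical but nontrivial and typically rests on a Ne\v{c}as-type negative-norm estimate for the divergence. Once it is available, the compactness upgrade and the algebraic step for rigid motions are routine.
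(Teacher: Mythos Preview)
Your argument is correct. The paper itself does not prove this proposition; it merely quotes it from \cite{scalar}, so there is no in-paper proof to compare against. Your route---second Korn inequality plus a Peetre--Tartar style compactness/contradiction argument, finished by the observation that the zero set of a nontrivial infinitesimal rigid motion is an affine subspace of dimension at most $n-2$---is the standard way such generalized Korn inequalities are established and is exactly what one would expect the cited reference to contain. The only point worth tightening in exposition is that weak $W^{1,p}$-compactness comes from reflexivity ($1<p<\infty$) rather than from Rellich--Kondrachov, which supplies the strong $L^p$-convergence; the subsequent upgrade to strong $W^{1,p}$-convergence via the second Korn inequality applied to differences is clean and correct.
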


\vspace*{0.2cm}

\section{Analysis of the boundary integral formulation}

For $r=\min\{p,2\}$, we consider the space $$X^p=\{(u,v) \in (W^{1,p}(\om))^n \times (\widetilde{W}^{1-\frac{1}{r},r}(\gs))^n\  : u|_\pom+v\in W^{\frac{1}{2},2}(\pom)^n\}$$
equipped with the norm $$\|u,v\|_{X^p} = \|u\|_{W^{1,p}(\om)} + \|v\|_{\widetilde{W}^{1-\frac{1}{r},r}(\gs)} + \|u|_\pom + v\|_{W^{\frac{1}{2},2}(\pom)} \ .$$
Note that $X^p = (W^{1,p}(\om))^n \times (\widetilde{W}^{\frac{1}{2},2}(\gs))^n$ when $p\geq 2$, so that we recover a vector--valued variant of the Banach spaces considered in \cite{scalar}.

\begin{lem}\label{normequiv}
$(X^p, \|\cdot\|_{X^p})$ is a Banach space, and
$$|u,v|_{X^p} = \|u\|_{W^{1,p}(\om)} +  \|u|_\pom + v\|_{W^{\frac{1}{2},2}(\pom)}$$
defines an equivalent norm on $X^p$.
\end{lem}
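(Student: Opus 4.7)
The plan is to treat completeness and norm equivalence separately; both reduce to a single continuous embedding $W^{\frac{1}{2},2}(\pom)\hookrightarrow W^{1-\frac{1}{r},r}(\pom)$ that bridges the two scales appearing in the definition of $X^p$.

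For completeness, given a Cauchy sequence $(u_k,v_k)$ in $X^p$, each of the three summands in $\|\cdot\|_{X^p}$ forces Cauchyness of $u_k$ in $W^{1,p}(\om)^n$, of $v_k$ in $\widetilde{W}^{1-\frac{1}{r},r}(\gs)^n$, and of $u_k|_\pom+v_k$ in $W^{\frac{1}{2},2}(\pom)^n$; call the respective limits $u$, $v$, $w$. By the trace theorem applied in $W^{1,r}(\om)\supset W^{1,p}(\om)$ (Remark \ref{Wsp}(b)), $u_k|_\pom\to u|_\pom$ in $W^{1-\frac{1}{r},r}(\pom)^n$. Combined with $v_k\to v$ in $\widetilde{W}^{1-\frac{1}{r},r}(\gs)^n\subset W^{1-\frac{1}{r},r}(\pom)^n$ and with the inclusion $W^{\frac{1}{2},2}(\pom)\hookrightarrow W^{1-\frac{1}{r},r}(\pom)$ applied to $u_k|_\pom+v_k\to w$, uniqueness of limits in the common target space forces $w=u|_\pom+v$. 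In particular $u|_\pom+v\in W^{\frac{1}{2},2}(\pom)^n$, so $(u,v)\in X^p$ and the convergence in $X^p$ is immediate.

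For the equivalence, $|u,v|_{X^p}\leq\|u,v\|_{X^p}$ is trivial, and the only thing to bound is $\|v\|_{\widetilde{W}^{1-\frac{1}{r},r}(\gs)}$. The key algebraic observation is that $v$, extended by zero off $\gs$, coincides with $(u|_\pom+v)-u|_\pom$ on \emph{all} of $\pom$, since on $\gt$ the zero extension of $v$ vanishes and $u|_\pom+v$ reduces to $u|_\pom$ there. Triangle inequality then gives
\begin{equation*}
\|v\|_{\widetilde{W}^{1-\frac{1}{r},r}(\gs)}\leq \|u|_\pom+v\|_{W^{1-\frac{1}{r},r}(\pom)}+\|u|_\pom\|_{W^{1-\frac{1}{r},r}(\pom)},
\end{equation*}
and I would control the first summand by $\|u|_\pom+v\|_{W^{\frac{1}{2},2}(\pom)}$ via the bridging embedding, and the second by $\|u\|_{W^{1,p}(\om)}$ via the trace theorem in $W^{1,r}(\om)$ together with Remark \ref{Wsp}(b).

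The only nontrivial point is the embedding $W^{\frac{1}{2},2}(\pom)\hookrightarrow W^{1-\frac{1}{r},r}(\pom)$ for $p\in(1,2)$ (where $r=p$); both indices decrease but in favour of the target, so it follows from monotonicity of the Sobolev scale in the smoothness index on the bounded Lipschitz manifold $\pom$ composed with the boundary analogue of Remark \ref{Wsp}(b), i.e.\ $W^{1-\frac{1}{p},2}(\pom)\hookrightarrow W^{1-\frac{1}{p},p}(\pom)$ via Hölder. For $p\geq 2$ the embedding is tautological ($r=2$, $1-\frac{1}{r}=\frac{1}{2}$), and the whole argument collapses to the one already used in \cite{scalar}.
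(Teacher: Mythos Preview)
Your proof is correct and follows essentially the same route as the paper: completeness via componentwise limits identified through the embedding $W^{\frac{1}{2},2}(\pom)\hookrightarrow W^{1-\frac{1}{r},r}(\pom)$, and norm equivalence via the triangle inequality $\|v\|_{\widetilde{W}^{1-\frac{1}{r},r}(\gs)}\leq \|u|_\pom+v\|_{W^{1-\frac{1}{r},r}(\pom)}+\|u|_\pom\|_{W^{1-\frac{1}{r},r}(\pom)}$ combined with that embedding and the trace theorem. The only cosmetic difference is that the paper traces first into $W^{1-\frac{1}{p},p}(\pom)$ and then embeds into $W^{1-\frac{1}{r},r}(\pom)$, whereas you embed $W^{1,p}(\om)\hookrightarrow W^{1,r}(\om)$ first and then trace; also note that Remark~\ref{Wsp}(b) as stated covers only $p\leq 2$, so for $p\geq 2$ you are using the equally elementary reverse inclusion $W^{1,p}\hookrightarrow W^{1,2}$ rather than that remark literally.
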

\begin{proof}
It is readily verified that $\|\cdot \|_{X^p}$ defines a norm on $X^p$. To show completeness, let $(u_j, v_j) \in X$ be a Cauchy sequence. Then $(u_j, v_j)$ converges to a limit $(u,v)$ in the Banach space $W^{1,p}(\om)^n \times \widetilde{W}^{1-\frac{1}{r},r}(\gs)^n$. Also $u_j|_\pom + v_j$ converges to a limit $w$ in $W^{\frac{1}{2},2}(\pom)^n$. However, the continuity of the trace operator assures that $u_j|_\pom \to u|_\pom$ in ${W}^{1-\frac{1}{p},p}(\pom)^n$. Therefore in ${W}^{1-\frac{1}{p},p}(\pom)^n$, hence also in ${W}^{1-\frac{1}{r},r}(\pom)^n$, $u_j|_\pom + v_j$ converges both to $u|_\pom + v$ and to $w$. This means that $u|_\pom + v = w \in W^{\frac{1}{2},2}(\pom)^n$, or $(u, v) \in X^p$.\\

To see the equivalence of norms, note that $|u,v|_{X^p}\leq \|u,v\|_{X^p}$. On the other hand, the continuous inclusion of $W^{\frac{1}{2},2}(\pom)$ into ${W}^{1-\frac{1}{r},r}(\pom)$, of ${W}^{1-\frac{1}{p},p}(\pom)$ into ${W}^{1-\frac{1}{r},r}(\pom)$, and the continuity of the trace operator from $W^{1,p}(\om)$ to ${W}^{1-\frac{1}{p},p}(\pom)$
imply
\begin{align*}
\|u,v\|_{X^p} &\leq \|u\|_{W^{1,p}(\om)} + \|u|_\pom\|_{{W}^{1-\frac{1}{r},r}(\pom)}+\|u|_\pom + v\|_{{W}^{1-\frac{1}{r},r}(\pom)}\\
 & \qquad + \|u|_\pom + v\|_{W^{\frac{1}{2},2}(\pom)}\\
&\leq \|u\|_{W^{1,p}(\om)} + \|u|_\pom\|_{{W}^{1-\frac{1}{p},p}(\pom)}+\|u|_\pom + v\|_{{W}^{\frac{1}{2},2}(\pom)}\\
 & \qquad + \|u|_\pom + v\|_{W^{\frac{1}{2},2}(\pom)}\\
&\lesssim \|u\|_{W^{1,p}(\om)} + \|u|_\pom + v\|_{{W}^{\frac{1}{2},2}(\pom)}\\
& = |u,v|_{X^p} \ .
\end{align*}
The assertion follows.
\end{proof}

We consider a variational formulation of the contact problem in terms of the functional
$$J(u,v) = \langle A(\varepsilon(u)), \varepsilon(u)\rangle + \frac{1}{2}\langle S(u|_\pom+v), u|_\pom+v\rangle - L(u,v)$$
on $X^p$. Here $A$ is derived from $A'$ by an explicit formula, $v=u_0+u_c-u$, $$j(v) = \int_\gs  \mathcal{F}\ |v_t|\ ,$$ and $$L(u,v) = \int_\om f u + \langle t_0 + Su_0, u|_\pom+v \rangle\ .$$
This paper investigates the numerical approximation of the following nonsmooth variational problem over the closed convex subset $$K = \left\{(u,v) : v_n \leq 0, \langle S 1, u|_\pom +v-u_0\rangle = 0\right\}$$ of $X^p$: \\
\noindent Find $(\hat u,\hat v) \in K$ such that
\begin{equation}\label{minproblem}J(\hat u,\hat v) +j(\hat v)= \min_{(u,v) \in K} J(u,v)+j(v)\ .
\end{equation}
Note that $j$ is Lipschitz, but not differentiable.\\

As in \cite{scalar} one observes that Problem \eqref{minproblem} is equivalent to the contact problem \eqref{CTP}. The existence of a unique solution to the latter is therefore a consequence of the following theorem.

\begin{thm}\label{existence}
There exists a unique minimizer $(\hat u,\hat v) \in K$ of $J+j$ over $K$.
\end{thm}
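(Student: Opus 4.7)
My plan is to apply the direct method of the calculus of variations: establish that $J+j$ is a proper, convex, coercive and weakly lower semicontinuous functional on $X^p$, and that $K$ is a weakly closed, nonempty, convex subset. Uniqueness then follows from strict convexity of $J$.

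First I would check that $X^p$ is a reflexive Banach space. By Lemma \ref{normequiv} the norm $\|\cdot\|_{X^p}$ is equivalent to $|\cdot|_{X^p}$, and the map $(u,v)\mapsto (u, v, u|_\pom+v)$ embeds $X^p$ as a closed subspace of the reflexive product $(W^{1,p}(\om))^n \times (\widetilde W^{1-1/r,r}(\gs))^n \times (W^{1/2,2}(\pom))^n$, hence $X^p$ is reflexive. Next, $K$ is nonempty (it contains $(0,u_0)$ after a translation), convex, and closed in $X^p$ because the trace $v\mapsto v_n$ and the linear form $\langle S1,u|_\pom+v-u_0\rangle$ are continuous on $X^p$; by convexity it is weakly closed. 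The functional $j$ is convex and continuous, hence weakly lsc. For the $S$--term, Remark \ref{Wsp} and the definition of $X^p$ ensure that $u|_\pom+v\in W^{\frac{1}{2},2}(\pom)^n$, so $\langle S(u|_\pom+v),u|_\pom+v\rangle$ is a continuous positive semidefinite quadratic form in this variable. The $A$--term is convex thanks to monotonicity of $A'$ (since $A$ is an antiderivative of $A'$ along rays), continuous on $(W^{1,p}(\om))^n$ by \eqref{upperbound1}--\eqref{upperbound2}, and strictly convex because of the uniform monotonicity. Finally, the linear form $L$ is continuous on $X^p$ since $f\in L^{p'}$, $t_0\in W^{-1/2,2}(\pom)^n$, and $Su_0\in W^{-1/2,2}(\pom)^n$.

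The main obstacle is coercivity on $K$. The $A$--term gives, via the lower monotonicity bound, a control of the form $\langle A(\varepsilon(u)),\varepsilon(u)\rangle\gtrsim \|\varepsilon(u)\|_{L^p(\om)}^p - C$. The Steklov--Poincar\'e term is coercive on the subspace $\{w\in W^{1/2,2}(\pom)^n : \langle S1,w\rangle =0\}$ (its kernel consisting of rigid motions, which for $S$ restricted this way reduces to controlling only the constants and, together with positivity of $\mathcal{V}$ and $\mathcal{W}$, yields $\langle Sw,w\rangle\gtrsim \|w\|^2_{W^{1/2,2}(\pom)}$). The constraint built into $K$ places $u|_\pom+v-u_0$ into exactly this subspace, so combined with the trace inequality I obtain control of $\|u|_\pom+v\|_{W^{1/2,2}(\pom)}$, and thence of $\|u|_\pom\|_{L^1(\pom)}$. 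Proposition \ref{korn} then upgrades $\|\varepsilon(u)\|_{L^p}$ to $\|u\|_{W^{1,p}(\om)}$. Altogether $J(u,v)+j(v)\geq c\,|u,v|_{X^p}^r - C(1+|u,v|_{X^p})$ for $r=\min\{p,2\}$, which is coercive since $r>1$. Throughout I use that the linear term $L(u,v)$ can be absorbed by Young's inequality.

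With these ingredients in place, the direct method gives a minimizing sequence in $K$ which, by coercivity, is bounded in $X^p$; reflexivity furnishes a weakly convergent subsequence, weak closedness of $K$ places the limit in $K$, and weak lower semicontinuity of $J+j$ identifies the limit as a minimizer. Uniqueness follows from strict convexity of $J$ (from uniform monotonicity of $A'$) plus convexity of $j$: if two minimizers existed, their midpoint would give a strictly smaller value of $J+j$, a contradiction. This proves the theorem.
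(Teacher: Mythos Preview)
Your approach via the direct method is correct and constitutes a genuinely different route from the paper's proof. The paper argues much more briefly: it first establishes Lemma~\ref{monotony}, showing that the operator associated to $J$ is bounded and strongly monotone on $X^p$, and then simply invokes a black-box perturbation result for monotone operators (Zeidler, Proposition 32.36) to conclude existence and uniqueness for $J+j$ over $K$. Thus the paper works at the level of the variational inequality and monotone operator theory, whereas you work at the level of the minimization functional via reflexivity, convexity, coercivity and weak lower semicontinuity.

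Two small remarks on your write-up. First, your discussion of the kernel of $S$ is unnecessary and slightly off: the paper states outright that the exterior Steklov--Poincar\'e operator $S$ gives a coercive form on all of $W^{\frac12,2}(\pom)^n$, so you do not need the constraint in $K$ to obtain $\langle Sw,w\rangle\gtrsim\|w\|^2_{W^{1/2,2}}$. Second, for Korn's inequality you only control $\|u|_{\gt}\|_{L^1(\gt)}$ (since $v$ is supported on $\gs$, hence $(u|_\pom+v)|_{\gt}=u|_{\gt}$), not $\|u|_\pom\|_{L^1(\pom)}$; this suffices because $\gt\neq\emptyset$. Neither point affects the validity of your argument.

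What each approach buys: the paper's route is shorter and reuses Lemma~\ref{monotony}, which is needed anyway for the error analysis; it also does not rely on $J$ having a potential structure and would apply directly to the variational inequality. Your route is more self-contained and elementary in that it avoids citing an abstract monotone-operator result, at the cost of verifying reflexivity of $X^p$, weak closedness of $K$, and weak lower semicontinuity by hand.
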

The crucial ingredient in the proof is a monotonicity estimate:
\begin{lem}\label{monotony} The operator associated to $J$ is strongly monotone on $X^p$.\\
Let $r = \min\{p,2\}$, $q = \max\{p,2\}$ and $C>0$. Then for every $(u_1,v_1),(u_2,v_2) \in X^p$ with $\|u_{1},v_1\|_{X^{p}(\om)}, \|u_{2},v_2\|_{X^p(\om)}<C$, there holds
\begin{align*}
& \|u_2-u_1,v_2-v_1\|_{X^p}^q\\
&\qquad \lesssim_C \langle A'(\varepsilon(u_2))-A'(\varepsilon(u_1)), \varepsilon(u_2)-\varepsilon(u_1)\rangle\\
& \qquad \qquad+ \langle S (( u_2- u_1)|_\pom + v_2- v_1), (u_2-u_1)|_\pom + v_2-v_1\rangle\\
& \qquad\qquad \lesssim_C \|u_2-u_1,v_2-v_1\|_{X^p}^r \ .
\end{align*}
\end{lem}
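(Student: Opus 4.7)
The plan is to treat the two inequalities separately, working throughout with the equivalent norm $|u,v|_{X^p}$ from Lemma \ref{normequiv}. Write $w:=(u_2-u_1)|_\pom+v_2-v_1$ and $\varepsilon_{12}:=\varepsilon(u_2)-\varepsilon(u_1)$. The backbone of both estimates is a Korn-type bound relating $\|u_2-u_1\|_{W^{1,p}(\om)}$ to the two energy-natural quantities: since $v_i\in\widetilde W^{1-1/r,r}(\gs)^n$ vanishes on $\gt$, one has $(u_2-u_1)|_{\gt}=w|_{\gt}$, so that Proposition \ref{korn} with $\Gamma=\gt$, together with the embedding $W^{\frac{1}{2},2}(\pom)\hookrightarrow L^1(\pom)$, yields
\begin{equation}\label{kornbound}
\|u_2-u_1\|_{W^{1,p}(\om)}\lesssim \|\varepsilon_{12}\|_{L^p(\om)}+\|w\|_{W^{\frac{1}{2},2}(\pom)}.
\end{equation}

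For the lower bound, coercivity of $S$ on $W^{\frac{1}{2},2}(\pom)^n$ gives $\|w\|_{W^{\frac{1}{2},2}}^2\lesssim\langle Sw,w\rangle$, while the first inequality of \eqref{upperbound2} (when $p\geq 2$) or of \eqref{upperbound1} (when $p<2$) gives $\|\varepsilon_{12}\|_{L^p}^q\lesssim_C\langle A'(\varepsilon(u_2))-A'(\varepsilon(u_1)),\varepsilon_{12}\rangle$. In the second case the weight $(\|\varepsilon(u_1)\|_{L^p}+\|\varepsilon(u_2)\|_{L^p})^{p-2}$ is absorbed into the constant using $\|u_i,v_i\|_{X^p}<C$. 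Inserting both bounds into \eqref{kornbound}, raising to the $q$-th power with $(a+b)^q\lesssim a^q+b^q$, and finally upgrading $\|w\|_{W^{\frac{1}{2},2}}^2$ to $\|w\|_{W^{\frac{1}{2},2}}^q$ via $\|w\|_{W^{\frac{1}{2},2}}^{q-2}\lesssim_C 1$ (relevant only when $p\geq 2$, $q=p$), one obtains the lower inequality.

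For the upper bound, continuity of $S$ immediately gives $\langle Sw,w\rangle\lesssim\|w\|_{W^{\frac{1}{2},2}}^2$, and setting $z=\varepsilon_{12}$ in the second inequality of \eqref{upperbound1} or \eqref{upperbound2} gives $\langle A'(\varepsilon(u_2))-A'(\varepsilon(u_1)),\varepsilon_{12}\rangle\lesssim_C\|\varepsilon_{12}\|_{L^p}^r\lesssim\|u_2-u_1\|_{W^{1,p}(\om)}^r$, with boundedness used again to trivialise the weight when $p\geq 2$. Adding the two bounds and using $\|w\|_{W^{\frac{1}{2},2}}^2\lesssim_C\|w\|_{W^{\frac{1}{2},2}}^r$ (needed only when $r=p<2$) together with $a^r+b^r\leq 2(a+b)^r$, one obtains the upper inequality in terms of $|u_2-u_1,v_2-v_1|_{X^p}^r$.

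The main obstacle is the bookkeeping of the mismatched natural scalings $\|\cdot\|_{L^p}^p$ versus $\|\cdot\|_{W^{\frac{1}{2},2}}^2$ in the two energy terms: to obtain a single homogeneous power of the $X^p$-norm on each side of the inequality one has to trade $p$ and $2$ against each other on uniformly bounded quantities, which is exactly what the a priori bound $\|u_i,v_i\|_{X^p}<C$ enables. All constants in the resulting chain of inequalities therefore inherit a $C$-dependence, as recorded by the notation $\lesssim_C$.
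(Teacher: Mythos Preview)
Your argument is correct and follows essentially the same route as the paper: Korn's inequality on $\gt$ (using that $v_i$ vanish there so $(u_2-u_1)|_{\gt}=w|_{\gt}$), coercivity/continuity of $S$, the monotony/continuity estimates \eqref{upperbound1}--\eqref{upperbound2} with the weight absorbed via $\|u_i,v_i\|_{X^p}<C$, and a final power adjustment on bounded quantities. The only cosmetic difference is that you invoke the equivalent norm $|\cdot|_{X^p}$ from Lemma~\ref{normequiv} up front, whereas the paper effectively re-derives its content inline by recovering the $\|v_2-v_1\|_{\widetilde W^{1-1/p,p}(\gs)}$ term via the triangle inequality and the embedding $W^{1/2,2}(\gs)\hookrightarrow W^{1-1/p,p}(\gs)$; your version is the cleaner packaging of the same computation.
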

\begin{proof} The upper bound is a consequence of the estimates \eqref{upperbound1}, \eqref{upperbound2} for the nonlinear operator and the boundedness of $S$ from $W^{\frac{1}{2},2}(\pom)^n$ to $W^{-\frac{1}{2},2}(\pom)^n$. For $p\geq 2$, we refer to \cite{scalar}, Lemma 3, for the proof of an analogous lower estimate.\\
When $p<2$ the monotony of $A'$ resp.~coercivity of $S$ imply for any $\delta \in (0,1)$
\begin{align}\label{estimate1}
&\langle A'(\varepsilon(u_2))-A'(\varepsilon(u_1)), \varepsilon(u_2)-\varepsilon(u_1)\rangle\nonumber\\
& + \langle S (( u_2- u_1)|_\pom + v_2- v_1), (u_2-u_1)|_\pom + v_2-v_1\rangle\nonumber\\
& \gtrsim \|\varepsilon(u_2-u_1)\|^p_{L^p(\om)} + \|(u_2-u_1)|_\pom + v_2-v_1\|_{W^{\frac{1}{2},2}(\pom)}^2 \nonumber\\
& \gtrsim \|\varepsilon(u_2-u_1)\|^2_{L^p(\om)} + \|(u_2-u_1)|_\pom + v_2-v_1\|_{W^{\frac{1}{2},2}(\gs)}^2 + \|u_2-u_1\|^2_{W^{\frac{1}{2},2}(\gt)}\nonumber\\
& \qquad +\|(u_2-u_1)|_\pom + v_2-v_1\|_{W^{\frac{1}{2},2}(\pom)}^2\nonumber\\
& \gtrsim \|\varepsilon(u_2-u_1)\|^2_{L^p(\om)} + \delta \|(u_2-u_1)|_\pom + v_2-v_1\|_{W^{1-\frac{1}{p},p}(\gs)}^2 + \|u_2-u_1\|^2_{W^{\frac{1}{2},2}(\gt)}\nonumber\\
& \qquad +\|(u_2-u_1)|_\pom + v_2-v_1\|_{W^{\frac{1}{2},2}(\pom)}^2 \ .
\end{align}
In the last inequality we use the continuous inclusion $W^{\frac{1}{2},2}(\gs) \subset W^{1-\frac{1}{p},p}(\gs)$.
Korn's inequality, Proposition \ref{korn}, implies
\begin{equation}\label{estimate2}
\|\varepsilon(u_2-u_1)\|^2_{L^p(\om)}+\|u_2-u_1\|^2_{W^{\frac{1}{2},2}(\gt)}\gtrsim \|u_2-u_1\|^2_{W^{1,p}(\om)}\ .\end{equation}
Further note from the triangle inequality, the convexity of $x \mapsto x^2$ as well as the continuity of the trace map from $W^{1,p}(\om)$ to $W^{1-\frac{1}{p},p}(\gs)$:
\begin{align}\label{estimate3}
& \|v_2- v_1\|^2_{\widetilde W^{1-\frac{1}{p},p}(\gs)} \leq \left(\|(u_2-u_1)|_\gs+v_2- v_1\|_{ W^{1-\frac{1}{p},p}(\gs)}+\|(u_2-\hat u_1)|_\gs\|_{ W^{1-\frac{1}{p},p}(\gs)}\right)^2 \nonumber\\
& \leq 2 \|(u_2- u_1)|_\gs+v_2- v_1\|_{ W^{1-\frac{1}{p},p}(\gs)}^2+2 \|u_2- u_1\|^2_{ W^{1-\frac{1}{p},p}(\gs)}\nonumber \\
& \leq 2 \|(u_2-u_1)|_\gs+v_2- v_1\|_{ W^{1-\frac{1}{p},p}(\gs)}^2+2C'\|u_2-u_1\|^2_{ W^{1,p}(\om)} \ .
\end{align}
The asserted estimate follows from \eqref{estimate1}, \eqref{estimate2} and \eqref{estimate3}, after choosing $\delta>0$ sufficiently small.\\
Strong monotony on all of $X^p$ is shown similarly, but for large $\|\varepsilon(u_2-u_1)\|_{ L^{p}(\om)}$ the exponent $2$ in the lower bound has to be replaced by $p$.
\end{proof}

\begin{proof}[Proof (of Theorem \ref{existence})]
By Lemma \ref{monotony} the operator associated to $J$ is bounded and strongly monotone. Existence and uniqueness for the perturbation $J+j$ of $J$ follow e.g.~by applying the perturbation result \cite{zei}, Proposition 32.36.
\end{proof}

\section{Discretization and a priori error analysis}

Let $\{\mathcal{T}_h\}_{h\in I}$ a regular triangulation of
$\om$ into disjoint open regular triangles ($n=2$) resp.~tetrahedra ($n=3$) $T$, so that
$\overline{\om} = \bigcup_{T \in \mathcal{T}_h} \overline{T}$. Each
element has at most one edge resp.~face on $\pom$, and the closures of any two
of them share at most a single vertex, edge or face. Let $h_T$ denote the
diameter of $T \in \mathcal{T}_h$ and $\rho_T$ the diameter of the
largest inscribed ball. We assume that $1 \leq \max_{T \in
\mathcal{T}_h} \frac{h_T}{\rho_T} \leq R$ independent of $h$ and
that $h = \max_{T\in \mathcal{T}_h} h_T$. $\mathcal{E}_h$ is going
to be the set of all edges of the triangles / faces of the tetrahedra in $\mathcal{T}_h$. Associated to $\mathcal{T}_h$ is the space $\Whp \subset
\Wp$ of functions whose restrictions to any $T \in \mathcal{T}_h$
are linear.

The boundary $\pom$ is triangulated by $\{l \in \mathcal{E}_h : l
\subset \pom\}$. For $r = \min\{p,2\}$, $W^{1-\frac{1}{r}, r}_h(\pom)$ denotes the corresponding space of continuous, piecewise linear functions, and $\widetilde{W}^{1-\frac{1}{r}, r}_h(\Gamma_s)$ the subspace of those supported on
$\gs$. Finally, $\Whmg \subset \Wmg$ is the space of piecewise constant functions, and $X^p_h = \Whp \times \Whgs \subset X^p$.

We denote by $i_h: \Whp \hookrightarrow \Wp$, $j_h : \Whgs\hookrightarrow
\Wgs$ and $k_h: \Whmg \hookrightarrow \Wmg$ the canonical inclusion
maps.

The discrete problem involves the discretized functional
\begin{equation*}
J_h(u_h,v_h) = \langle A(\varepsilon(u_h)), \varepsilon(u_h)\rangle + \frac{1}{2}\langle S(u_h|_\pom+v_h), u_h|_\pom+v_h\rangle - L_h(u_h,v_h)
\end{equation*}
on $X^p_h$. Here
\[
S_h = \frac12( W+(I-K')k_h(k_h^* V k_h)^{-1} kt_h^*(I-K))
\]
and $$L_h(u_h,v_h) =
\int_\om f u_h + \langle t_0+ S_h u_0, u_h|_\pom + v_h\rangle\ .$$
There exists $h_0>0$ such that the approximate Steklov--Poincar\'e operator $S_h$ is coercive
uniformly in $h<h_0$, i.e.~$\langle S_h u_h, u_h\rangle \geq
\alpha_S \|u_h\|_{\Wg}^2$ with $\alpha_S$ independent of $h$. Therefore, as in the previous section the discrete minimization problem
\begin{equation}\label{minproblemh}J(\hat u_h,\hat v_h) +j(\hat v_h)= \min_{(u_h,v_h) \in K \cap X_h^p} J(u_h,v_h)+j(v_h)\ .
\end{equation}
is associated to a perturbation of a strongly monotone operator on $X^p_h$ and admits a unique minimizer.\\

\noindent Our Galerkin method for the numerical approximation relies on an equivalent reformulation of the continuous and discretized minimization problems \eqref{minproblem}, \eqref{minproblemh} as variational inequalities:\\
\noindent Find $(\hat u, \hat v) \in K$ such that
\begin{align}\label{vi}
& \langle A'(\varepsilon(\hat u)), \varepsilon(u-\hat u)\rangle + \langle S(\hat u|_\pom+\hat v), (u-\hat u)|_\pom+v-\hat v\rangle\nonumber \\ & \qquad \qquad\qquad +j(v)-j(\hat v)\geq L(u-\hat u,v-\hat v)
\end{align}
for all $(u,v) \in K$. \\

\noindent The discretized variant reads as follows:\\
\noindent Find $(\hat u_h, \hat v_h) \in K\cap X_h^p$ such that
\begin{align}\label{vih}
& \langle A'(\varepsilon(\hat u_h)), \varepsilon(u_h-\hat u_h)\rangle + \langle S_h(\hat u_h|_\pom+\hat v_h), (u_h-\hat u_h)|_\pom+v_h-\hat v_h\rangle\nonumber \\ & \qquad \qquad\qquad +j(v_h)-j(\hat v_h)\geq L_h(u_h-\hat u_h,v_h-\hat v_h)
\end{align}
for all $(u_h,v_h) \in K\cap X_h^p$. \\

\begin{thm}\label{a priori}
a) The following a priori estimate holds with $q = \max\{p,2\}$:
\begin{align*}
& \|\hat u-\hat u_h, \hat v-\hat v_h\|_{X^p}^q\\
& \lesssim \inf_{(u_h, v_h) \in K\cap X_h^p} \big\{\|\varepsilon(\hat u-u_h)\|_{L^p(\om)} + \|(\hat u - u_h)|_\pom + \hat v - v_h\|_{W^{\frac{1}{2},2}(\pom)} \\
& \quad + \|\hat v- v_h\|_{L^1(\gs)} \big\}
+\mathrm{dist}_{\Wmg}(V^{-1}(1-K)(\hat u + \hat v - u_0),\Whmg)^2 \ .
\end{align*}
b) If $\hat v \in \widetilde{W}^{\frac{1}{2},2}(\Gamma_s)^n$, e.g.~for $p\geq 2$ or $\Gamma_s = \emptyset$, the estimate can be improved to
\begin{align*}
& \|\hat u-\hat u_h, \hat v-\hat v_h\|_{X^p}^q\\
& \lesssim \inf_{(u_h, v_h) \in K\cap X_h^p} \big\{\|\varepsilon(\hat u-u_h)\|^{\beta}_{L^p(\om)} + \|(\hat u - u_h)|_\pom + \hat v - v_h\|_{W^{\frac{1}{2},2}(\pom)}^2 \\
& \quad + \|\hat v- v_h\|_{L^1(\gs)} \big\}
+\mathrm{dist}_{\Wmg}(V^{-1}(1-K)(\hat u + \hat v - u_0),\Whmg)^2 \ .
\end{align*}
Here $\beta = \frac{2}{3-p}$ for $p<2$ resp.~$\beta = p' = \frac{p}{p-1}$ for $p\geq 2$.
\end{thm}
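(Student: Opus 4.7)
The plan is to reduce the $X^p$-error to a duality pairing via Lemma \ref{monotony}, then use the continuous and discrete variational inequalities \eqref{vi}, \eqref{vih} to replace the unknown discrete minimiser $(\hat u_h,\hat v_h)$ on the right-hand side by an arbitrary admissible interpolant $(u_h,v_h)\in K\cap X_h^p$. Setting $e_u=\hat u-\hat u_h$, $e_v=\hat v-\hat v_h$ and $\Phi=e_u|_\pom+e_v$, Lemma \ref{monotony} gives
$$\|e_u,e_v\|_{X^p}^q \lesssim \mathcal{E} := \langle A'(\varepsilon(\hat u))-A'(\varepsilon(\hat u_h)),\varepsilon(e_u)\rangle + \langle S\Phi,\Phi\rangle,$$
so the task reduces to an upper bound on $\mathcal{E}$. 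Testing \eqref{vi} with $(u,v)=(\hat u_h,\hat v_h)\in K\cap X_h^p\subset K$ and \eqref{vih} with an arbitrary $(u_h,v_h)\in K\cap X_h^p$, then adding the two with appropriate signs, produces
$$\mathcal{E} \;\leq\; R(\hat u_h,\hat v_h)(\hat u-u_h,\hat v-v_h) + \bigl(j(v_h)-j(\hat v)\bigr) + \mathcal{C}_h,$$
where $R(u,v)(w,z):=L(w,z)-\langle A'(\varepsilon(u)),\varepsilon(w)\rangle-\langle S(u|_\pom+v),w|_\pom+z\rangle$ and $\mathcal{C}_h$ collects the consistency contributions from $S-S_h$ and $L-L_h$. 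The friction term is controlled by the Lipschitz estimate $j(v_h)-j(\hat v)\leq\|\mathcal{F}\|_{L^\infty(\gs)}\|\hat v-v_h\|_{L^1(\gs)}$.

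For part (a) I bound $R(\hat u_h,\hat v_h)(\hat u-u_h,\hat v-v_h)$ by H\"older. The a priori $W^{1,p}(\om)^n\times W^{\frac{1}{2},2}(\pom)^n$-bounds on $(\hat u_h,\hat u_h|_\pom+\hat v_h)$, which follow from the uniform coercivity of $S_h$ and Lemma \ref{monotony}, render $A'(\varepsilon(\hat u_h))$ and $S(\hat u_h|_\pom+\hat v_h)$ uniformly bounded in $L^{p'}$ and $\Wmg$, respectively, yielding linear control by $\|\varepsilon(\hat u-u_h)\|_{L^p(\om)}+\|(\hat u-u_h)|_\pom+\hat v-v_h\|_{W^{\frac{1}{2},2}(\pom)}$. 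The consistency $\mathcal{C}_h$ factorises through $S-S_h=(1-\mathcal{K}')(\mathcal{V}^{-1}-\mathcal{V}_h^{-1})(1-\mathcal{K})$ applied to $\hat u|_\pom+\hat v-u_0$, and the standard C\'ea argument for the Galerkin inverse of the coercive operator $\mathcal{V}$ on $\Wmg$ produces the squared best-approximation distance $\mathrm{dist}_{\Wmg}(V^{-1}(1-K)(\hat u+\hat v-u_0),\Whmg)^2$.

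For part (b), the regularity $\hat v\in\widetilde{W}^{\frac{1}{2},2}(\gs)^n$ allows me to re-centre the residual at $(\hat u,\hat v)$ via the identity
$$R(\hat u_h,\hat v_h)(w,z) = R(\hat u,\hat v)(w,z) + \langle A'(\varepsilon(\hat u))-A'(\varepsilon(\hat u_h)),\varepsilon(w)\rangle + \langle S\Phi,w|_\pom+z\rangle.$$
Applying \eqref{vi} with test $(u_h,v_h)\in K$ yields $R(\hat u,\hat v)(\hat u-u_h,\hat v-v_h)\leq j(v_h)-j(\hat v)$, so that substitution gives
$$\mathcal{E} \;\lesssim\; \bigl(j(v_h)-j(\hat v)\bigr) + \langle A'(\varepsilon(\hat u))-A'(\varepsilon(\hat u_h)),\varepsilon(\hat u-u_h)\rangle + \langle S\Phi,(\hat u-u_h)|_\pom+\hat v-v_h\rangle + \mathcal{C}_h.$$
Cauchy--Schwarz on the $S$-piece absorbs $\tfrac14\langle S\Phi,\Phi\rangle$ back into $\mathcal{E}$ and produces the squared term $\|(\hat u-u_h)|_\pom+\hat v-v_h\|^2_{W^{\frac{1}{2},2}(\pom)}$. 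The nonlinear piece is handled via \eqref{upperbound1}/\eqref{upperbound2}: it factorises as $\|\varepsilon(e_u)\|_{L^p}^{p-1}\|\varepsilon(\hat u-u_h)\|_{L^p}$ for $p<2$ (with the two factors swapped for $p\geq 2$), and Young's inequality with exponents arranged so that the power of $\|\varepsilon(e_u)\|_{L^p}$ equals $q$ and is absorbed into $\mathcal{E}$ forces precisely $\beta=\tfrac{2}{3-p}$ when $p<2$ and $\beta=p'$ when $p\geq 2$.

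The main obstacle is the interplay between the non-smoothness of $j$ and the $p$-asymmetry of $X^p$. The Lipschitz friction term $j(v_h)-j(\hat v)$ is only accessible through the $L^1(\gs)$-norm and admits no Young-type improvement, so it must appear linearly in both bounds. Moreover, for $p<2$ the $\Phi$-part of $\mathcal{E}$ lives in the Hilbert space $W^{\frac{1}{2},2}(\pom)^n$ while $\varepsilon(e_u)$ only belongs to $L^p$; in part (a), where $\hat v\notin\widetilde{W}^{\frac{1}{2},2}(\gs)^n$ in general, the residual cannot be re-centred at $(\hat u,\hat v)$ in the same manner and only the weaker linear bound is obtainable. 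A secondary technical difficulty is the careful bookkeeping of $\mathcal{C}_h$ under the low regularity of $\hat v$, so that the consistency estimate really produces the squared distance in $\Wmg$ rather than a weaker boundary norm.
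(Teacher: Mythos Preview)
Your derivation of the key inequality
\[
\mathcal{E}\;\leq\;R(\hat u_h,\hat v_h)(\hat u-u_h,\hat v-v_h)+\bigl(j(v_h)-j(\hat v)\bigr)+\mathcal{C}_h
\]
is correct and is exactly the identity the paper obtains after adding the two variational inequalities; part (a) then follows as you indicate. The gap is in your argument for part (b). You assert that testing \eqref{vi} with $(u_h,v_h)\in K$ gives
\[
R(\hat u,\hat v)(\hat u-u_h,\hat v-v_h)\;\leq\;j(v_h)-j(\hat v),
\]
but the sign is wrong. With your definition of $R$, \eqref{vi} reads $R(\hat u,\hat v)(u-\hat u,v-\hat v)\leq j(v)-j(\hat v)$ for every admissible $(u,v)$; setting $(u,v)=(u_h,v_h)$ and using linearity in the test direction yields only the \emph{lower} bound
\[
R(\hat u,\hat v)(\hat u-u_h,\hat v-v_h)\;\geq\;-(j(v_h)-j(\hat v)),
\]
which is useless for bounding $\mathcal{E}$ from above. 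The re-centring identity itself is pure algebra, so your diagnosis that ``the residual cannot be re-centred'' without regularity is not the obstacle; the obstacle is that after re-centring you need an \emph{upper} bound on $R(\hat u,\hat v)(\hat u-u_h,\hat v-v_h)$.

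This is precisely where the regularity hypothesis $\hat v\in\widetilde{W}^{\frac12,2}(\Gamma_s)^n$ enters in the paper. It allows one to test \eqref{vi} not only with $(u_h,\hat v)$ but also with the reflected point $(2\hat u-u_h,\hat v)$, since then $u_h|_{\partial\Omega}+\hat v$ and $(2\hat u-u_h)|_{\partial\Omega}+\hat v$ both lie in $W^{\frac12,2}(\partial\Omega)^n$. The two inequalities together produce the \emph{equality}
\[
\langle A'(\varepsilon(\hat u)),\varepsilon(u_h-\hat u)\rangle+\langle S(\hat u|_{\partial\Omega}+\hat v),(u_h-\hat u)|_{\partial\Omega}\rangle=L(u_h-\hat u,0),
\]
which kills the $u$-part of the residual and leaves only a boundary term in $v_h-\hat v$ controllable by $\|\hat v-v_h\|_{L^1(\Gamma_s)}$. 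Without the regularity assumption no admissible reflected test pair is available, the one-sided inequality cannot be upgraded, and only the linear estimate of part (a) survives. Your proposal therefore proves (a) but not (b).
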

\begin{proof}
Adding the continuous and discrete variational inequalities, we see that
\begin{align*}
0 & \leq \langle A'(\varepsilon(\hat u)), \varepsilon(\hat u_h) - \varepsilon(\hat u)\rangle + \langle S(\hat u|_\pom+\hat v), (\hat u_h-\hat u)|_\pom+\hat v_h-\hat v\rangle\\
& \quad + j(\hat v_h) - j(\hat v) - L(\hat u_h - \hat u, \hat v_h - \hat v)\\
& \quad + \langle A'(\varepsilon(\hat u_h)), \varepsilon(u_h) - \varepsilon(\hat u_h)\rangle + \langle S_h(\hat u_h|_\pom+\hat v_h), (u_h - \hat u_h)|_\pom+v_h-\hat v_h\rangle\\
& \quad + j(v_h) - j(\hat v_h) - L_h(u_h - \hat u_h, v_h - \hat v_h) \ .
\end{align*}
Hence,
\begin{align*}
&\langle A'(\varepsilon(\hat u)) - A'(\varepsilon(\hat u_h)), \varepsilon(\hat u) - \varepsilon(\hat u_h)\rangle + \langle S((\hat u-\hat u_h)|_\pom+\hat v-\hat v_h), (\hat u-\hat u_h)|_\pom+\hat v-\hat v_h\rangle\\
&\leq
\langle A'(\varepsilon(\hat u)) - A'(\varepsilon(\hat u_h)), \varepsilon(\hat u) - \varepsilon(\hat u_h)\rangle + \langle S((\hat u-\hat u_h)|_\pom+\hat v-\hat v_h), (\hat u-\hat u_h)|_\pom+\hat v-\hat v_h\rangle\\
&\quad+\langle A'(\varepsilon(\hat u)), \varepsilon(\hat u_h) - \varepsilon(\hat u)\rangle + \langle S(\hat u|_\pom+\hat v), (\hat u_h-\hat u)|_\pom+\hat v_h-\hat v\rangle\\
& \quad + j(\hat v_h) - j(\hat v) - L(\hat u_h - \hat u, \hat v_h - \hat v)\\
& \quad + \langle A'(\varepsilon(\hat u_h)), \varepsilon(u_h) - \varepsilon(\hat u_h)\rangle + \langle S_h(\hat u_h|_\pom+\hat v_h), (u_h - \hat u_h)|_\pom+v_h-\hat v_h\rangle\\
& \quad + j(v_h) - j(\hat v_h) - L_h(u_h - \hat u_h, v_h - \hat v_h)\\
& = \langle A'(\varepsilon(\hat u_h)), \varepsilon(u_h) - \varepsilon(\hat u)\rangle + \langle S(\hat u_h|_\pom + \hat v_h), (u_h - \hat u)|_\pom+v_h-\hat v\rangle\\
& \quad + j(v_h)- j(\hat v) - L(u_h-\hat u, v_h - \hat v) - (L_h-L)(u_h-\hat u_h, v_h - \hat v_h)\\
&\quad +\langle (S_h-S)(\hat u_h|_\pom + \hat v_h),(u_h-\hat u_h)|_\pom + v_h-\hat v_h \rangle \\
& = \langle A'(\varepsilon(\hat u)) - A'(\varepsilon(\hat u_h)), \varepsilon(\hat u) - \varepsilon(u_h)\rangle + \langle S((\hat u - \hat u_h)|_\pom + \hat v - \hat v_h), (\hat u - u_h)|_\pom + \hat v - v_h\rangle\\
& \quad +\langle A'(\varepsilon(\hat u)), \varepsilon(u_h)-\varepsilon(\hat u)\rangle + \langle S(\hat u|_\pom + \hat v), (u_h-\hat u)|_\pom + v_h-\hat v \rangle - L(u_h-\hat u, v_h - \hat v) \\
& \quad + j(v_h)- j(\hat v) \\
& \quad - (L_h-L)(u_h-\hat u_h, v_h - \hat v_h) +\langle (S_h-S)(\hat u_h|_\pom + \hat v_h),(u_h-\hat u_h)|_\pom + v_h-\hat v_h \rangle \ .
\end{align*}
Let $p<2$. To bound $\langle A'(\varepsilon(\hat u)) - A'(\varepsilon(\hat u_h)), \varepsilon(\hat u) - \varepsilon(u_h)\rangle$, we use the estimate \eqref{upperbound1} and Young's inequality for any $\delta>0$:
\begin{align*}
\langle A'(\varepsilon(\hat u)) - A'(\varepsilon(\hat u_h)), \varepsilon(\hat u-u_h)\rangle &\lesssim \|\varepsilon(\hat u-\hat u_h)\|_{L^p(\Omega)}^{p-1}\|\varepsilon(\hat u-u_h)\|_{L^p(\Omega)}\\
&\lesssim\delta^{\frac{2}{p-1}} \|\varepsilon(\hat u-\hat u_h)\|_{L^p(\om)}^{2} + \delta^{-\frac{2}{3-p}}\|\varepsilon(\hat u-u_h)\|_{L^p(\om)}^{\frac{2}{3-p}}\ .
\end{align*}
On the other hand, for $p\geq 2$ the upper bound \eqref{upperbound2} yields
\begin{align*}
\langle A'(\varepsilon(\hat u))-A'(\varepsilon(\hat u_h)), \varepsilon(\hat u - u_h)\rangle &\lesssim \|\varepsilon(\hat u - \hat u_h)\|_{L^p(\om)} \|\varepsilon(\hat u - u_h)\|_{L^p(\om)} \\
&\lesssim \delta^p \|\varepsilon(\hat u - \hat u_h)\|_{L^p(\om)}^p + \delta^{-p'} \|\varepsilon(\hat u - \hat u_h)\|_{L^p(\om)}^{p'}\ .
\end{align*}
As for the second term, we use the boundedness of $S$ from $W^{\frac{1}{2},2}(\pom)^n$ to $W^{-\frac{1}{2},2}(\pom)^n$ to estimate
\begin{align*}
& \langle S((\hat u - \hat u_h)|_\pom + \hat v - \hat v_h), (\hat u - u_h)|_\pom + \hat v - v_h\rangle\\
& \lesssim \|(\hat u - \hat u_h)|_\pom + \hat v - \hat v_h\|_{W^{\frac{1}{2},2}(\pom)}\|(\hat u - u_h)|_\pom + \hat v - v_h\|_{W^{\frac{1}{2},2}(\pom)}\\
& \lesssim \delta\|(\hat u - \hat u_h)|_\pom + \hat v - \hat v_h\|_{W^{\frac{1}{2},2}(\pom)}^2+ \delta^{-1}\|(\hat u - u_h)|_\pom + \hat v - v_h\|_{W^{\frac{1}{2},2}(\pom)}^{2}\ .
\end{align*}
Without further assumptions on $\hat v$, we estimate the second line using Cauchy--Schwarz by a multiple of
$$\|\varepsilon(u_h-\hat u)\|_{L^p(\Omega)} + \|(u_h-\hat u)|_\pom + v_h-\hat v \rangle\|_{W^{\frac{1}{2},2}(\pom)}$$
For part b), where $\hat v \in \widetilde{W}^{\frac{1}{2},2}(\Gamma_s)$, one may use the variational inequality for an improved estimate: Substituting $(u,v) = (u_h, \hat v)$ and $(u,v) = (2 \hat u - u_h, \hat v)$ into the variational inequality on $X^p$, we obtain
$$\langle A'(\varepsilon(\hat u)), \varepsilon(u_h) - \varepsilon(\hat u)\rangle + \langle S(\hat u|_\pom+\hat v), (u_h-\hat u)|_\pom\rangle = L(u_h - \hat u,0)\ .$$
With this, the second line reduces to
$\langle S(\hat u|_\pom + \hat v), v_h-\hat v \rangle+L(0, \hat v - v_h)$, i.e.~to
$$- \langle t_0 - S(\hat u|_\pom + \hat v-u_0), v_h - \hat v\rangle = - \langle A'(\varepsilon(\hat u)) \cdot \nu, v_h - \hat v\rangle \leq \|\mathcal{F}\|_{L^{\infty}(\gs)}\|v_{n,h} - \hat v_n\|_{L^1(\gs)}.$$
For the third line,
$$j(v_h) - j(\hat v) = \int_\gs \mathcal{F}(|v_{t,h}| - |\hat v_{t}|) \leq \int_\gs \mathcal{F}(|v_{t,h} - \hat v_t|) \leq \|\mathcal{F}\|_{L^{\infty}(\gs)}\|v_{t,h} - \hat v_t\|_{L^1(\gs)}\ .$$
Finally, the last line simplifies as follows:
\begin{align*}
&- (L_h-L)(u_h-\hat u_h, v_h - \hat v_h) +\langle (S_h-S)(\hat u_h|_\pom + \hat v_h),(u_h-\hat u_h)|_\pom + v_h-\hat v_h \rangle\\
&= \langle (S_h-S)(\hat u_h|_\pom + \hat v_h-u_0),(u_h-\hat u_h)|_\pom + v_h-\hat v_h \rangle \\
& \lesssim \delta^{-1}\|(S_h-S)(\hat u_h|_\pom + \hat v_h-u_0)\|_{W^{-\frac{1}{2},2}(\pom)}^{2} + \delta \|(u_h-\hat u_h)|_\pom + v_h-\hat v_h\|_{W^{\frac{1}{2},2}(\pom)}^2\\
& \leq \delta^{-1}\|(S_h-S)(\hat u_h|_\pom + \hat v_h-u_0)\|_{W^{-\frac{1}{2},2}(\pom)}^2 \\
&\quad + \delta \|(u_h-\hat u)|_\pom + v_h-\hat v\|_{W^{\frac{1}{2},2}(\pom)}^2 + \delta \|(\hat u-\hat u_h)|_\pom + \hat v-\hat v_h\|_{W^{\frac{1}{2},2}(\pom)}^2\ .
\end{align*}
The term involving $S_h- S$ is known to be bounded by \cite{c} $$\mathrm{dist}_{\Wmg}(V^{-1}(1-K)(\hat u + \hat v - u_0),\Whmg)^2\ .$$
To sum up, for general $\hat v$ we obtain for $\alpha = \frac{p}{p-1}$, $\beta = \frac{2}{3-p}$ ($p<2$) resp.~$\alpha=p$, $\beta = p'$ ($p\geq 2$)
\begin{align*}
&\langle A'(\varepsilon(\hat u)) - A'(\varepsilon(\hat u_h)), \varepsilon(\hat u) - \varepsilon(\hat u_h)\rangle + \langle S((\hat u-\hat u_h)|_\pom+\hat v-\hat v_h), (\hat u-\hat u_h)|_\pom+\hat v-\hat v_h\rangle\\
&\lesssim \delta^{\alpha} \|\varepsilon(\hat u - \hat u_h)\|_{L^p(\om)}^q + \delta\|(\hat u - \hat u_h)|_\pom + \hat v - \hat v_h\|_{W^{\frac{1}{2},2}(\pom)}^2+\delta^{-\beta} \|\varepsilon(\hat u-u_h)\|^{\beta}_{L^p(\om)}\\
& \quad + \|\varepsilon(u_h-\hat u)\|_{L^p(\Omega)} + \|(u_h-\hat u)|_\pom + v_h-\hat v \rangle\|_{W^{\frac{1}{2},2}(\pom)}\\
& \quad +  \delta^{-1}\|(\hat u - u_h)|_\pom + \hat v - v_h\|_{W^{\frac{1}{2},2}(\pom)}^2 + \|v_h - \hat v\|_{L^p(\gs)} + \delta \|(u_h-\hat u)|_\pom + v_h-\hat v\|_{W^{\frac{1}{2},2}(\pom)}^2\\
& \quad + \delta^{-1}\mathrm{dist}_{\Wmg}(V^{-1}(1-K)(\hat u + \hat v - u_0),\Whmg)^2 \ .
\end{align*}
The lowest exponents dominate.\\

When $\hat v \in \widetilde{W}^{\frac{1}{2},2}(\Gamma_s)^n$, the estimates yield:
\begin{align*}
&\langle A'(\varepsilon(\hat u)) - A'(\varepsilon(\hat u_h)), \varepsilon(\hat u) - \varepsilon(\hat u_h)\rangle + \langle S((\hat u-\hat u_h)|_\pom+\hat v-\hat v_h), (\hat u-\hat u_h)|_\pom+\hat v-\hat v_h\rangle\\
&\lesssim \delta^{\alpha} \|\varepsilon(\hat u - \hat u_h)\|_{L^p(\om)}^q + \delta\|(\hat u - \hat u_h)|_\pom + \hat v - \hat v_h\|_{W^{\frac{1}{2},2}(\pom)}^2+\delta^{-\beta} \|\varepsilon(\hat u-u_h)\|^{\beta}_{L^p(\om)}\\
& \quad +  \delta^{-1}\|(\hat u - u_h)|_\pom + \hat v - v_h\|_{W^{\frac{1}{2},2}(\pom)}^2 + \|v_h - \hat v\|_{L^p(\gs)}+ \delta \|(u_h-\hat u)|_\pom + v_h-\hat v\|_{W^{\frac{1}{2},2}(\pom)}^2\\
& \quad + \delta^{-1}\mathrm{dist}_{\Wmg}(V^{-1}(1-K)(\hat u + \hat v - u_0),\Whmg)^2  \ .
\end{align*}

Note that as in Lemma \ref{monotony}, the monotony of $A'$ and coercivity of $S$ allow to bound the left hand side from below by $$\|\varepsilon(\hat u - \hat u_h)\|_{L^p(\om)}^q + \|(\hat u-\hat u_h)|_\pom+\hat v-\hat v_h\|^2_{W^{\frac{1}{2},2}(\pom)}\ .$$

Choosing $\delta>0$ sufficiently small, the claimed estimates follow.
\end{proof}

\begin{rem}
a) Theorem \ref{a priori} proves convergence of the proposed FE--BE coupling procedure for quasi--uniform grid refinements. However, generic weak solutions to the contact problem \eqref{CTP} only belong to $X^p$ and not to any higher-order Sobolev space. Therefore the convergence can be arbitrarily slow as the grid size $h$ tends to $0$.\\
b) Like for the $p$--Laplacian operators in \cite{scalar}, under additional assumptions on $A'$ slightly sharper estimates can be obtained with respect to certain quasinorms on $X^p$.
\end{rem}

\section{An a posteriori estimate}\label{apostsection}

%

If we consider the variational inequality \eqref{vih} for $v_h=\hat v_h$ and with $u_h \mapsto u_h$ resp.~$u_h\mapsto 2\hat u_h - u_h$, Problem \eqref{vih} splits into an interior equation and an inequality on the boundary: For all $(u_h,v_h)\in K\cap X^p_h$:
\begin{align}\label{visplith}
&\langle A'(\varepsilon(\hat u_h)), \varepsilon(u_h)\rangle + \langle S_h(\hat u_h|_\pom + \hat v_h), u_h|_\pom\rangle = \int_\om f u_h +\langle t_0+S_hu_0,u_h \rangle = L_h(u_h,0)\ ,\nonumber\\
&\langle S_h(\hat u_h|_\pom + \hat v_h),v_h-\hat v_h \rangle + j(v_h)-j(\hat v_h)\geq \langle t_0 + S u_0, v_h-\hat v_h  \rangle = L_h(0,v_h-\hat v_h)\ .
\end{align}
For the continuous inequality, we only get a weaker assertion because $u|_\pom + v$ needs to be in $W^{\frac{1}{2},2}(\pom)$. Choosing $u=\hat u +\hat u_h-u_h$, $v= \hat v + \hat v_h - v_h$ for any $(u_h, v_h) \in X^p_h$ with $v_h \leq \hat v + \hat v_h$ transforms \eqref{vi} into the estimate
\begin{align}\label{visplit}
&\langle A'(\varepsilon(\hat u)), \varepsilon(u_h-\hat u_h)\rangle + \langle S(\hat u|_\pom + \hat v), (u_h-\hat u_h)|_\pom + v_h - \hat v_h\rangle \nonumber \\ & \qquad \leq  j(\hat v+\hat v_h-v_h) - j(\hat v)+ L(u_h-\hat u_h, v_h - \hat v_h)\ .
\end{align}
In combination with the coercivity estimates, we may start to derive an a posteriori estimate:
\begin{align*}
& \|\varepsilon(\hat u-\hat u_h)\|^q_{L^p(\om)} +\|(\hat u - \hat u_h)|_\pom + \hat v - \hat v_h\|_{W^{\frac{1}{2},2}(\pom)}^2\\
& \lesssim \langle A'(\varepsilon(\hat u))-A'(\varepsilon(\hat u_h)), \varepsilon(\hat u - u_h)\rangle + \langle A'(\varepsilon(\hat u))-A'(\varepsilon(\hat u_h)), \varepsilon(u_h - \hat u_h)\rangle\\
& \quad + \langle S((\hat u-\hat u_h)|_\pom + \hat v-\hat v_h), (\hat u - u_h)|_\pom + \hat v - v_h\rangle\\
& \quad + \langle S((\hat u-\hat u_h)|_\pom + \hat v-\hat v_h), (u_h - \hat u_h)|_\pom + v_h - \hat v_h\rangle
\end{align*}
We consider the second and fourth term on the right hand side,
\begin{align*}
&\langle A'(\varepsilon(\hat u)), \varepsilon(u_h - \hat u_h)\rangle- \langle A'(\varepsilon(\hat u_h)), \varepsilon(u_h - \hat u_h)\rangle\\
& \quad + \langle S(\hat u|_\pom+ \hat v), (u_h - \hat u_h)|_\pom + v_h - \hat v_h\rangle -\langle S(\hat u_h|_\pom+ \hat v_h), (u_h - \hat u_h)|_\pom + v_h - \hat v_h\rangle\ .
\end{align*}
Applying the equality in \eqref{visplith} to $$\langle A'(\varepsilon(\hat u_h)), \varepsilon(u_h - \hat u_h)\rangle + \langle S(\hat u_h|_\pom+ \hat v_h), (u_h - \hat u_h)|_\pom + v_h - \hat v_h\rangle\ ,$$ and inequality \eqref{visplit} to
$$\langle A'(\varepsilon(\hat u)), \varepsilon(u_h - \hat u_h)\rangle + \langle S(\hat u|_\pom+ \hat v), (u_h - \hat u_h)|_\pom + v_h - \hat v_h\rangle\ ,$$
 we estimate their sum by
\begin{align*}
&-L_h(u_h-\hat u_h,0)+ j(\hat v+\hat v_h-v_h)-j(\hat v)  + L(u_h-\hat u_h, v_h - \hat v_h)\\
&\qquad -\langle S_h(\hat u_h|_\pom + \hat v_h), v_h - \hat v_h \rangle+\langle(S_h-S)(\hat u_h|_\pom + \hat v_h),(u_h - \hat u_h)|_\pom + v_h - \hat v_h \rangle\ .
\end{align*}
For $$\langle A'(\varepsilon(\hat u)), \varepsilon(\hat u - u_h)\rangle+ \langle S(\hat u|_\pom + \hat v), (\hat u - u_h)|_\pom + \hat v - v_h\rangle\ ,$$ we use the variational inequality \eqref{vi} with $(u,v) = (u_h, v_h)$ to conclude
\begin{align*}
& \|\varepsilon(\hat u-\hat u_h)\|^q_{L^p(\om)} +\|(\hat u - \hat u_h)|_\pom + \hat v - \hat v_h\|_{W^{\frac{1}{2},2}(\pom)}^2\\
& \lesssim L(\hat u - u_h, \hat v - v_h) + j(v_h)-j(\hat v)-\langle A'(\varepsilon(\hat u_h)), \varepsilon(\hat u - u_h)\rangle \\
& \quad - \langle S(\hat u_h|_\pom +\hat v_h), (\hat u - u_h)|_\pom + \hat v - v_h\rangle-\langle S_h(\hat u_h|_\pom + \hat v_h), v_h - \hat v_h \rangle\\
& \quad -L_h(u_h-\hat u_h,0)+ j(\hat v+\hat v_h-v_h)- j(\hat v) + L(u_h-\hat u_h, v_h - \hat v_h)\\
& \quad +\langle(S_h-S)(\hat u_h|_\pom + \hat v_h),(u_h - \hat u_h)|_\pom + v_h - \hat v_h \rangle \\
& = \int_\om f(\hat u - u_h) + \langle t_0 + S u_0 ,(\hat u - u_h)|_\pom+ \hat v - v_h) + j(\hat v+\hat v_h-v_h)+ j(v_h)-2j(\hat v)\\
& \quad -\langle A'(\varepsilon(\hat u_h)), \varepsilon(\hat u - u_h)\rangle - \langle S_h(\hat u_h|_\pom +\hat v_h), (\hat u - u_h)|_\pom + \hat v - v_h\rangle\\
& \quad -\langle S_h(\hat u_h|_\pom + \hat v_h), v_h - \hat v_h \rangle - \langle (S_h-S)u_0,(u_h-\hat u_h)|_\pom\rangle + \langle t_0 + S u_0,v_h - \hat v_h\rangle\\
& \quad +\langle(S_h-S)(\hat u_h|_\pom + \hat v_h),(\hat u - \hat u_h)|_\pom + \hat v - \hat v_h \rangle \\
& = \int_\om f(\hat u - u_h)  + j(\hat v+\hat v_h-v_h)+ j(v_h)-2j(\hat v)-\langle A'(\varepsilon(\hat u_h)), \varepsilon(\hat u - u_h)\rangle \\
& \quad + \langle t_0- S_h(\hat u_h|_\pom +\hat v_h-u_0), (\hat u - u_h)|_\pom + \hat v - \hat v_h\rangle\\
& \quad  +\langle(S_h-S)(\hat u_h|_\pom + \hat v_h-u_0),(\hat u - \hat u_h)|_\pom + \hat v - \hat v_h \rangle \ .
\end{align*}
The first term is estimated as usual for $u_h = \hat u_h + \Pi_h(\hat u - \hat u_h)$ using the H\"{o}lder inequality and the properties of a Clement interpolation operator $\Pi_h$ (see e.g.~\cite{bs}):
$$\int_\om f (\hat u - u_h) \lesssim \|\hat u - \hat u_h\|_{W^{1,p}(\om)}\ \left(\sum_{T\subset \om} h_T^{p'}\|f\|_{L^{p'}(T)}^{p'}\right)^{1/{p'}}\qquad (p'=\textstyle{\frac{p}{p-1}})$$
Similarly, integrating by parts we obtain
$$\langle A'(\varepsilon(\hat u_h)), \varepsilon(\hat u - u_h)\rangle  = \sum_{E \subset \om} \int_E[A'(\varepsilon(\hat u_h))\nu](\hat u - u_h)|_\pom +\langle A'(\varepsilon(\hat u_h))\nu, (\hat u - u_h)|_\pom\rangle_\pom$$
with
$$\sum_{E \subset \om} \int_E[A'(\varepsilon(\hat u_h))\nu](\hat u - u_h)|_\pom \lesssim  \|\hat u - \hat u_h\|_{W^{1,p}(\om)}\ \left(\sum_{E \subset \om} h_E \|[A'(\varepsilon(\hat u_h))\nu]\|_{L^{p'}(E)}^{p'}\right)^{1/{p'}}\ .$$
It remains to consider the boundary contributions. To do so, recall the strong formulation of the contact conditions in terms of $\sigma_n(u)$ and $\sigma_t(u)$ on $\gs$,
\begin{align*}& \sigma_n(u)\leq 0\ , \ v_n \leq 0\ , \ \sigma_n(u)v_n=0\ , \\
&|\sigma_t(u)|\leq \mathcal{F} \ , \ \sigma_t(u) v_t+\mathcal{F}|v_t|=0\ .
\end{align*}
Then, substituting $v_h = \hat v_h$, we obtain
$$j(\hat v + \hat v_h - v_h) = j(\hat v) = \int_\gs \mathcal{F}|\hat v_t| = -\langle \sigma_t(\hat u), \hat v_t\rangle = -\langle\sigma(\hat u), \hat v\rangle\ .$$
Also, $$j(\hat v_h) - \langle A'(\varepsilon(\hat u_h))\nu,\hat v_h\rangle \leq \int_\gs \left\{\mathcal{F}|\hat v_{h,t}|+ \sigma_t(\hat u_h)\hat v_{h,t}\right\} + \int_\gs (\sigma_n(\hat u_h) \hat v_{n,h})_+\ .$$
Together, the terms 
\begin{align*}
&j(\hat v+\hat v_h-v_h)+j(v_h)- 2j(\hat v)-\langle A'(\varepsilon(\hat u_h))\nu, (\hat u - u_h)|_\pom\rangle_\pom\\
&\quad  = - j(\hat v) +j(\hat v_h) -\langle A'(\varepsilon(\hat u_h))\nu, \hat v_h\rangle_\pom-\langle A'(\varepsilon(\hat u_h))\nu, (\hat u - u_h)|_\pom +\hat v- \hat v_h - \hat v\rangle_\pom
\end{align*}
are hence dominated by
\begin{align*}
&\langle\sigma(\hat u), \hat v\rangle + \int_\gs \left\{\mathcal{F}|\hat v_{h,t}|+ \sigma_t(\hat u_h)\hat v_{h,t}\right\} + \int_\gs (\sigma_n(\hat u_h) \hat v_{n,h})_+\\
& \quad -\langle A'(\varepsilon(\hat u_h))\nu, (\hat u - u_h)|_\pom +\hat v- \hat v_h - \hat v\rangle_\pom\\
&=\int_\gs \left\{\mathcal{F}|\hat v_{h,t}|+ \sigma_t(\hat u_h)\hat v_{h,t}\right\} + \int_\gs (\sigma_n(\hat u_h) \hat v_{n,h})_+  \\
& \quad -\langle A'(\varepsilon(\hat u_h))\nu, (\hat u - u_h)|_\pom+\hat v- \hat v_h \rangle_\pom+ \langle\sigma(\hat u) - \sigma(\hat u_h), \hat v\rangle\ .
\end{align*}
We split the $\sigma$--term into tangential and normal parts
$$ \langle \sigma(\hat u) -  \sigma(\hat u_h), \hat v\rangle= \langle \sigma_n(\hat u) - \sigma_n(\hat u_h), \hat v_n \rangle + \langle \sigma_t(\hat u) - \sigma_t(\hat u_h), \hat v_t \rangle\ ,$$
and estimate the normal part as follows ($r' = \frac{r}{r-1}$):
$$\langle \sigma_n(\hat u) - \sigma_n(\hat u_h), \hat v_n \rangle \leq -\langle \sigma_n(\hat u_h)_+, \hat v_n \rangle \lesssim \|\sigma_n(\hat u_h)_+\|_{\widetilde{W}^{1-\frac{1}{r},r'}(\gs)}\ . $$
For the tangential contribution, involving the Tresca friction, we find it convenient to write $\sigma_t(\hat u) = -\zeta \mathcal{F}$ with $|\zeta|\leq 1$ and $|v_t| = \zeta v_t$. Then
\begin{align*}
&\langle \sigma_t(\hat u) - \sigma_t(\hat u_h), \hat v_t\rangle = -\langle \zeta \mathcal{F}, \hat v_t\rangle - \langle\sigma_t(\hat u_h), \hat v_t\rangle = -\langle \mathcal{F}, |\hat v_t|\rangle - \langle\sigma_t(\hat u_h),\hat v_t\rangle \\
& \leq \langle (|\sigma_t(\hat u_h)|-\mathcal{F})_+,|\hat v_t|\rangle \lesssim   \|(|\sigma_t(\hat u_h)|-\mathcal{F})_+\|_{\widetilde{W}^{-1+\frac{1}{r},r'}(\gs)}\ .
\end{align*}
We conclude
\begin{align*}
& \|\varepsilon(\hat u-\hat u_h)\|^q_{L^p(\om)} +\|(\hat u - \hat u_h)|_\pom + \hat v - \hat v_h\|_{W^{\frac{1}{2},2}(\pom)}^q\\
& \lesssim\|\varepsilon(\hat u-\hat u_h)\|^q_{L^p(\om)} +\|(\hat u - \hat u_h)|_\pom + \hat v - \hat v_h\|_{W^{\frac{1}{2},2}(\pom)}^2\\
& \lesssim \|\hat u - \hat u_h\|_{W^{1,p}(\om)}\ \left(\sum_{T\subset \om} h_T^{p'}\|f\|_{L^{p'}(T)}^{p'}\right)^{1/{p'}}  \\
& \quad +\|\hat u - \hat u_h\|_{W^{1,p}(\om)}\ \left(\sum_{E \subset \om} h_E \|[A'(\varepsilon(\hat u_h))\nu]\|_{L^{p'}(E)}^{p'}\right)^{1/{p'}}\\
&\quad +\int_\gs \left\{\mathcal{F}|\hat v_{h,t}|+ \sigma_t(\hat u_h)\hat v_{h,t}\right\} + \int_\gs (\sigma_n(\hat u_h) \hat v_{n,h})_+  \\
& \quad +\| t_0+ S_h(u_0-\hat u_h|_\pom +\hat v_h)-A'(\varepsilon(\hat u_h))\nu\|_{{W}^{1-\frac{1}{r},r'}(\pom)}^{q'} \\
&  \quad + \|\sigma_n(\hat u_h)_+\|_{\widetilde{W}^{1-\frac{1}{r},r'}(\gs)} +\|(|\sigma_t(\hat u_h)|-\mathcal{F})_+\|_{\widetilde{W}^{-1+\frac{1}{r},r'}(\gs)}\\
& \quad  +\langle(S_h-S)(\hat u_h|_\pom + \hat v_h-u_0),(\hat u - \hat u_h)|_\pom + \hat v - \hat v_h \rangle\ .
\end{align*}
Summing up:
\begin{thm}\label{a posteriori}
Let $r=\min\{p,2\}$ and $q=\max\{p,2\}$. The following a posteriori estimate holds:
\begin{align*}
& \|\hat u-\hat u_h, \hat v - \hat v_h\|^q_{X^p}\\
& \lesssim \left(\sum_{T\subset \om} h_T^{p'}\|f\|_{L^{p'}(T)}^{p'}\right)^{q'/{p'}}  +\quad \left(\sum_{E \subset \om} h_E \|[A'(\varepsilon(\hat u_h))\nu]\|_{L^{p'}(E)}^{p'}\right)^{{q'}/{p'}}\\
& \quad +\| t_0+ S_h(u_0-\hat u_h|_\pom +\hat v_h)-A'(\varepsilon(\hat u_h))\nu\|_{{W}^{1-\frac{1}{r},r'}(\pom)}^{q'} \\
&\quad +\int_\gs \left\{\mathcal{F}|\hat v_{h,t}|+ \sigma_t(\hat u_h)\hat v_{h,t}\right\} + \int_\gs (\sigma_n(\hat u_h) \hat v_{n,h})_+  \\
&  \quad + \|\sigma_n(\hat u_h)_+\|_{\widetilde{W}^{1-\frac{1}{r},r'}(\gs)} +\|(|\sigma_t(\hat u_h)|-\mathcal{F})_+\|_{\widetilde{W}^{-1+\frac{1}{r},r'}(\gs)}\\
& \quad  +\|(S_h-S)(\hat u_h|_\pom + \hat v_h-u_0)\|_{\widetilde{W}^{1-\frac{1}{r},r'}(\gs)}^2\ .
\end{align*}

\begin{rem}
Adapting the interpolation operator $\Pi_h$ to include $\hat v - \hat v_h$ on $\gs$, it might be possible to improve the term $\| t_0+ S_h(u_0-\hat u_h|_\pom +\hat v_h)-A'(\varepsilon(\hat u_h))\nu\|_{\widetilde{W}^{1-\frac{1}{r},r'}(\gs)}^{q'}$ to $\| t_0+ S_h(u_0-\hat u_h|_\pom +\hat v_h)-A'(\varepsilon(\hat u_h))\nu\|_{\widetilde{W}^{\frac{1}{2},2}(\gs)}^{2}$.\end{rem}
\end{thm}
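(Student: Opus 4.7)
The plan is to combine the coercivity estimate from Lemma~\ref{monotony}, which bounds $\|\hat u - \hat u_h, \hat v - \hat v_h\|_{X^p}^q$ from below by $\langle A'(\varepsilon(\hat u)) - A'(\varepsilon(\hat u_h)), \varepsilon(\hat u - \hat u_h)\rangle + \langle S((\hat u - \hat u_h)|_\pom + \hat v - \hat v_h), (\hat u - \hat u_h)|_\pom + \hat v - \hat v_h\rangle$, with a manipulation of the continuous and discrete variational inequalities. For an arbitrary discrete test pair $(u_h,v_h)\in K \cap X^p_h$ with $v_h \leq \hat v + \hat v_h$, I would insert $u_h$, $v_h$ to decompose both bilinear forms into an ``approximation'' part involving $\hat u - u_h$, $\hat v - v_h$ and a ``residual'' part involving $u_h - \hat u_h$, $v_h - \hat v_h$.

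The approximation part is handled by testing the continuous variational inequality \eqref{vi} against $(u_h, v_h)$. The residual part is treated by applying the split discrete version \eqref{visplith} (equality on $\om$, inequality on $\gs$) together with the weaker split of the continuous inequality \eqref{visplit}. For $p<2$ one cannot test \eqref{vi} with $v_h$ directly because $u|_\pom + v$ must lie in $W^{\frac{1}{2},2}(\pom)$; the substitution $u = \hat u + \hat u_h - u_h$, $v = \hat v + \hat v_h - v_h$ circumvents this. Collecting the resulting expressions leaves a linear functional in $(\hat u - u_h, \hat v - \hat v_h)$ involving the body force $f$, the conormal flux $A'(\varepsilon(\hat u_h))\nu$, the boundary datum $t_0 + S_h(u_0 - \hat u_h|_\pom + \hat v_h)$, the friction functional $j$, and a consistency term involving $S_h - S$.

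The volume and edge-jump indicators then arise in the standard way by choosing $u_h = \hat u_h + \Pi_h(\hat u - \hat u_h)$ for a Cl\'ement interpolant $\Pi_h$, integrating $\langle A'(\varepsilon(\hat u_h)), \varepsilon(\hat u - u_h)\rangle$ by parts elementwise, and applying H\"older's inequality; this produces $\sum_{T\subset\om} h_T^{p'}\|f\|_{L^{p'}(T)}^{p'}$ and $\sum_{E\subset\om} h_E \|[A'(\varepsilon(\hat u_h))\nu]\|_{L^{p'}(E)}^{p'}$. The remaining boundary flux assembles into the $W^{1-\frac{1}{r},r'}(\pom)$ norm of $t_0 + S_h(u_0 - \hat u_h|_\pom + \hat v_h) - A'(\varepsilon(\hat u_h))\nu$, while the $S_h - S$ consistency contribution is bounded through the estimate of \cite{c} after Cauchy--Schwarz and Young.

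The main obstacle is the boundary contribution on $\gs$, since $j$ is nondifferentiable and the Tresca--contact conditions must be exploited pointwise. The key identities are $j(\hat v) = -\langle \sigma_t(\hat u), \hat v_t\rangle = -\langle \sigma(\hat u), \hat v\rangle$, obtained from the strong formulation, together with the one-sided bound $j(\hat v_h) - \langle A'(\varepsilon(\hat u_h))\nu, \hat v_h\rangle \leq \int_\gs \{\mathcal{F}|\hat v_{h,t}| + \sigma_t(\hat u_h)\hat v_{h,t}\} + \int_\gs (\sigma_n(\hat u_h)\hat v_{n,h})_+$. Substituting $v_h = \hat v_h$ in the master inequality isolates the residual $\langle \sigma(\hat u) - \sigma(\hat u_h), \hat v\rangle$, which I would split into normal and tangential parts: the normal part is controlled using $\sigma_n(\hat u) \leq 0$ and $\hat v_n \leq 0$, yielding $\|\sigma_n(\hat u_h)_+\|_{\widetilde{W}^{1-\frac{1}{r},r'}(\gs)}$; the tangential part is treated by writing $\sigma_t(\hat u) = -\zeta \mathcal{F}$ with $|\zeta|\leq 1$ and exploiting $|\hat v_t| = \zeta \hat v_t$, which produces $\|(|\sigma_t(\hat u_h)| - \mathcal{F})_+\|_{\widetilde{W}^{-1+\frac{1}{r},r'}(\gs)}$. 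Assembling all contributions yields the claimed a posteriori bound.
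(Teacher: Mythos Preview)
Your proposal is correct and follows essentially the same route as the paper: the coercivity from Lemma~\ref{monotony}, the decomposition via \eqref{visplith} and \eqref{visplit} with the substitution $u=\hat u+\hat u_h-u_h$, $v=\hat v+\hat v_h-v_h$, the choice $u_h=\hat u_h+\Pi_h(\hat u-\hat u_h)$ and $v_h=\hat v_h$, and the pointwise treatment of the contact terms via $j(\hat v)=-\langle\sigma(\hat u),\hat v\rangle$ and the $\zeta$--trick all match the paper's argument. One minor remark: the paper leaves the $S_h-S$ contribution in the form $\|(S_h-S)(\hat u_h|_\pom+\hat v_h-u_0)\|_{\widetilde W^{1-\frac{1}{r},r'}(\gs)}^2$ rather than converting it to the distance estimate of \cite{c}, so you can stop after Cauchy--Schwarz and Young without invoking that reference.
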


\section{Formulation in terms of layer potentials}\label{fulldisc}

In practice, one would like to estimate the numerical error without a priori information about $S-S_h$. This is achieved by formulating the problem directly in terms of the layer potentials $\mathcal{V},\mathcal{W},\mathcal{K},\mathcal{K}'$ rather than $S=\mathcal{W}+(1-\mathcal{K}')\mathcal{V}^{-1}(1-\mathcal{K})$. The arguments are a notationally more involved variant of those in Section \ref{apostsection}, and we only outline them.

We consider the space $$Y^p = X^p \times W^{-\frac{1}{2},2}(\pom)^n\ ,$$
equipped with the norm $$\|u,v,\phi\|_{Y^p} = \|u\|_{W^{1,p}(\om)} + \|v\|_{\widetilde{W}^{1-\frac{1}{p},p}(\gs)} + \|u|_\pom + v\|_{W^{\frac{1}{2},2}(\pom)}+\|\phi\|_{W^{-\frac{1}{2},2}(\pom)} \ .$$ From Lemma \ref{normequiv} we conclude that $(Y^p, \|\cdot\|_{Y^p})$ is a Banach space and
$$|u,v,\phi|_{Y^p} = \|u\|_{W^{1,p}(\om)} +  \|u|_\pom + v\|_{W^{\frac{1}{2},2}(\pom)}+\|\phi\|_{W^{-\frac{1}{2},2}(\pom)}$$
an equivalent norm on $Y^p$. We consider the discretization in finite dimensional subspaces $Y_h^p = X_h^p\times \Whmg^n$ of $Y^p$.

In order to show coercivity, we use a theoretical stabilization as in \cite{dirk}: Let $r_1, \dots, r_D$ a basis of the space of rigid body motions, and consider their orthogonal projections $\xi_1,\dots, \xi_D$ onto $L^2(\pom)$. The arguments in \cite{dirk}, Lemma 4 and Proposition 5, show that $|u,v,\phi|_{Y^p}$ is equivalent to the norm
\begin{align}\label{stabnorm}
|u,v,\phi|_{Y^p,s}^2 &= \|\varepsilon(u)\|^2_{L^{p}(\om)} +  \langle\mathcal{W}(u|_\pom + v),u|_\pom + v\rangle+\langle \phi,\mathcal{V}\phi\rangle\\
& \qquad + \sum_{j=1}^D|\langle \xi_j, (1-\mathcal{K})(u|_\pom + v)+\mathcal{V}\phi\rangle|^2.
\end{align}

On $Y^p$, we have the following equivalent formulation of the contact problem \eqref{CTP}:
Find $(\hat u, \hat v, \hat \phi) \in K'= (K\cap X^p)\times W^{-\frac{1}{2},2}(\pom)^n$ such that for all $(u,v,\phi)\in K'$:
\begin{align}\label{layerpotVI}
B(\hat u, \hat v, \hat \phi; u-\hat u, v-\hat v, \phi-\hat \phi)+ j(v)-j(\hat v) \geq \Lambda(u-\hat u, v-\hat v, \phi-\hat \phi)
\end{align}
with
\begin{eqnarray*}
B(u, v, \phi; \tilde u, \tilde v, \tilde \phi)&=&\langle A'(\varepsilon(u)), \varepsilon(\tilde u)
\rangle +\langle \mathcal{W}(u|_\pom+v) + (\mathcal{K}'-1) \phi,
\tilde u|_\pom +\tilde v\rangle\\
&&\quad+\langle \tilde \phi, \mathcal{V} \phi +
(1-\mathcal{K})(u|_\pom+v)\rangle,\\
\Lambda(u, v, \phi) &=& \langle t_0 + \mathcal{W} u_0, u|_\pom
+v\rangle + \int_\om f u+\langle \phi, (1-\mathcal{K}) u_0\rangle.
\end{eqnarray*}
The discretized problem is obtained by restricting to $Y^p_h$, and we denote its solution by $(\hat u_h, \hat v_h, \hat \phi_h)$.
We also consider a stabilized problem that for all $(u_h, v_h, \phi_h)\in K'\cap Y^p_h$
\begin{align*}
&\widetilde B(\hat u_{s,h}, \hat v_{s,h}, \hat \phi_{s,h}; u_h-\hat u_{s,h}, v_h-\hat v_{s,h}, \phi_h-\hat \phi_{s,h})+ j(v_h)-j(\hat v_{s,h}) \\
& \qquad \geq \widetilde\Lambda(u_h-\hat u_{s,h}, v_h-\hat v_{s,h}, \phi_h-\hat \phi_{s,h})\ ,
\end{align*}
where
\begin{align*}
\widetilde B(u, v, \phi; \tilde u, \tilde v, \tilde \phi) &= B(u, v, \phi; \tilde u, \tilde v, \tilde \phi) \\
& \ \ + \sum_{j=1}^D \langle\xi_j, \mathcal{V}\phi+(1-\mathcal{K})(u|_\pom + v) \rangle \langle\xi_j, \mathcal{V}\tilde\phi+(1-\mathcal{K})(\tilde u|_\pom + \tilde v) \rangle
\end{align*}
respectively
$$\widetilde \Lambda(u, v, \phi) = \Lambda(u, v, \phi) + \sum_{j=1}^D \langle\xi_j, (1-\mathcal{K})u_0 \rangle \langle\xi_j, \mathcal{V}\phi+(1-\mathcal{K})( u|_\pom +  v) \rangle\ .
$$
Because the variational inequality \eqref{layerpotVI} is an equality in $\phi$, as in \cite{dirk}, Proposition 3, the solution to the stabilized and nonstabilized problems coincide, $(\hat u_{h}, \hat v_{h}, \hat \phi_{h})=(\hat u_{s,h}, \hat v_{s,h}, \hat \phi_{s,h})$. However, the stabilized variational inequality is coercive in the stabilized norm \eqref{stabnorm}:
\begin{align*}
& \|\varepsilon(\hat u-\hat u_h)\|^q_{L^p(\om)} +\langle\mathcal{W}((\hat u - \hat u_h)|_\pom + \hat v - \hat v_h),(\hat u - \hat u_h)|_\pom + \hat v - \hat v_h \rangle\\
& \quad + \langle \mathcal{V} (\hat \phi - \hat \phi_h), \hat \phi - \hat \phi_h\rangle  +  \sum_{j=1}^D|\langle \xi_j, (1-\mathcal{K})((\hat u-\hat u_h)|_\pom + \hat v-\hat v_h)+\mathcal{V}(\hat \phi-\hat \phi_h)\rangle|^2\\
& \lesssim \langle A'(\varepsilon(\hat u))-A'(\varepsilon(\hat u_h)), \varepsilon(\hat u - \hat u_h)\rangle \\
& \quad + \langle\mathcal{W}((\hat u - \hat u_h)|_\pom + \hat v - \hat v_h) + (\mathcal{K}'-1)(\hat \phi - \hat \phi_h),(\hat u - \hat u_h)|_\pom + \hat v - \hat v_h \rangle\\
& \quad + \langle (1-\mathcal{K})((\hat u - \hat u_h)|_\pom + \hat v - \hat v_h)+\mathcal{V} (\hat \phi - \hat \phi_h), \hat \phi - \hat \phi_h\rangle\\
& \quad + \sum_{j=1}^D|\langle \xi_j, (1-\mathcal{K})((\hat u-\hat u_h)|_\pom + \hat v-\hat v_h)+\mathcal{V}(\hat \phi-\hat \phi_h)\rangle|^2
\end{align*}
Proceeding as in Section \ref{apostsection}, we obtain:
\begin{thm}\label{a posteriori}
Let $r=\min\{p,2\}$ and $q=\max\{p,2\}$. The following a posteriori estimate holds:
\begin{align*}
& \|\hat u-\hat u_h, \hat v - \hat v_h, \hat \phi - \hat \phi_h\|^q_{Y^p}\\
& \lesssim \left(\sum_{T\subset \om} h_T^{p'}\|f\|_{L^{p'}(T)}^{p'}\right)^{q'/{p'}}  +\quad \left(\sum_{E \subset \om} h_E \|[A'(\varepsilon(\hat u_h))\nu]\|_{L^{p'}(E)}^{p'}\right)^{{q'}/{p'}}\\
& \quad +\|t_0-\mathcal{W}(\hat u_h|_\pom+\hat v_h-u_0) - (\mathcal{K}'-1)
\hat \phi_h-A'(\varepsilon(\hat u_h))\nu\|_{{W}^{1-\frac{1}{r},r'}(\pom)}^{q'} \\
& \quad + \|\mathcal{V}
\hat \phi_h + (1-\mathcal{K})(\hat u_h|_\pom+\hat
v_h-u_0)\|_{W^{-\frac{1}{2},2}(\pom)}^2\\
&\quad +\int_\gs \left\{\mathcal{F}|\hat v_{h,t}|+ \sigma_t(\hat u_h)\hat v_{h,t}\right\} + \int_\gs (\sigma_n(\hat u_h) \hat v_{n,h})_+  \\
&  \quad + \|\sigma_n(\hat u_h)_+\|_{\widetilde{W}^{1-\frac{1}{r},r'}(\gs)} +\|(|\sigma_t(\hat u_h)|-\mathcal{F})_+\|_{\widetilde{W}^{-1+\frac{1}{r},r'}(\gs)}\ .
\end{align*}
\end{thm}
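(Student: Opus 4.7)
My plan is to extend the argument of Section \ref{apostsection} from the state space $X^p$ to the enlarged space $Y^p$, where the Steklov--Poincar\'e operator $S$ has been replaced by the four layer potentials $\mathcal{V}, \mathcal{W}, \mathcal{K}, \mathcal{K}'$ coupled through the additional unknown density $\hat\phi \in W^{-\frac{1}{2},2}(\pom)^n$. The starting point is the coercivity bound displayed immediately before the theorem statement: it suffices to dominate each of its right hand side contributions by the terms appearing in the claimed estimate, modulo $\delta$-multiples of the left hand side that will be absorbed via Young's inequality.

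The central observation, as in \cite{dirk}, is that the variational inequality \eqref{layerpotVI} reduces to an equality in the $\phi$-component, so that at the continuous level $\langle \tilde\phi, \mathcal{V}\hat\phi + (1-\mathcal{K})(\hat u|_\pom + \hat v - u_0)\rangle = 0$ for every $\tilde\phi \in W^{-\frac{1}{2},2}(\pom)^n$. Choosing test tuples $(u_h, v_h, \phi_h) = (\hat u_h + \Pi_h(\hat u - \hat u_h), \hat v_h, \hat\phi_h) \in K' \cap Y^p_h$ with $\Pi_h$ a Clement interpolation operator, and subtracting the discrete variational inequality from the continuous one evaluated at the same test tuple, decomposes $B(\hat u - \hat u_h, \hat v - \hat v_h, \hat\phi - \hat\phi_h; \hat u - \hat u_h, \hat v - \hat v_h, \hat\phi - \hat\phi_h)$ into three groups: interior residuals from $A'$ and the source $f$; boundary residuals of the traction equation and of the layer-potential density equation; and Tresca-friction contributions from $j$.

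For the interior residuals, elementwise integration by parts combined with Clement interpolation and H\"older's inequality yields the volumetric estimate $(\sum_T h_T^{p'}\|f\|_{L^{p'}(T)}^{p'})^{q'/p'}$ and the edge-jump estimate $(\sum_E h_E \|[A'(\varepsilon(\hat u_h))\nu]\|_{L^{p'}(E)}^{p'})^{q'/p'}$, exactly as in Section \ref{apostsection}. The traction residual $t_0 - \mathcal{W}(\hat u_h|_\pom + \hat v_h - u_0) - (\mathcal{K}' - 1)\hat\phi_h - A'(\varepsilon(\hat u_h))\nu$ is paired with $(\hat u - u_h)|_\pom + \hat v - \hat v_h \in W^{\frac{1}{2},2}(\pom)^n$ and controlled in the dual $W^{1-\frac{1}{r},r'}(\pom)$ norm. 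The density residual $\mathcal{V}\hat\phi_h + (1-\mathcal{K})(\hat u_h|_\pom + \hat v_h - u_0)$, tested against $\hat\phi - \hat\phi_h$ and processed via the $W^{-\frac{1}{2},2}$-coercivity of $\mathcal{V}$ with Young's inequality, produces the squared $W^{-\frac{1}{2},2}(\pom)$ norm in the estimate. Substituting $v_h = \hat v_h$ into the friction contributions and invoking the strong Tresca conditions $\sigma_t(\hat u) = -\zeta \mathcal{F}$ with $|\zeta|\leq 1$ together with $\sigma_n(\hat u)\hat v_n = 0$ reproduces the contact residual integrals $\int_\gs\{\mathcal{F}|\hat v_{h,t}| + \sigma_t(\hat u_h)\hat v_{h,t}\}$, $\int_\gs(\sigma_n(\hat u_h)\hat v_{n,h})_+$ and the dual norms $\|\sigma_n(\hat u_h)_+\|_{\widetilde{W}^{1-\frac{1}{r},r'}(\gs)}$, $\|(|\sigma_t(\hat u_h)|-\mathcal{F})_+\|_{\widetilde{W}^{-1+\frac{1}{r},r'}(\gs)}$ verbatim as in Section \ref{apostsection}.

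The principal new difficulty relative to Section \ref{apostsection} is controlling the rigid-body stabilization terms $\sum_{j=1}^D |\langle \xi_j, (1-\mathcal{K})((\hat u - \hat u_h)|_\pom + \hat v - \hat v_h) + \mathcal{V}(\hat\phi - \hat\phi_h)\rangle|^2$ on the left of the coercivity bound, which are inherited from the stabilized norm \eqref{stabnorm} and have no counterpart in Section \ref{apostsection}. Subtracting the vanishing continuous $\phi$-orthogonality identity, each such inner product reduces to the purely discrete quantity $\langle \xi_j, \mathcal{V}\hat\phi_h + (1-\mathcal{K})(\hat u_h|_\pom + \hat v_h - u_0)\rangle$, which is bounded by a constant multiple of $\|\mathcal{V}\hat\phi_h + (1-\mathcal{K})(\hat u_h|_\pom + \hat v_h - u_0)\|_{W^{-\frac{1}{2},2}(\pom)}$ thanks to the $W^{\frac{1}{2},2}$-regularity of the smooth functions $\xi_j$. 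Assembling the three residual groups, absorbing $\delta$-multiples on the left for sufficiently small $\delta$, and invoking the norm equivalence $|\cdot|_{Y^p} \sim |\cdot|_{Y^p,s}$, yields the claimed a posteriori bound.
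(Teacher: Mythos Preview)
Your proposal is correct and follows essentially the same route as the paper, which simply states ``Proceeding as in Section \ref{apostsection}, we obtain'' and gives no further details. Your explicit reduction of the rigid-body stabilization terms to the discrete density residual $\mathcal{V}\hat\phi_h + (1-\mathcal{K})(\hat u_h|_\pom + \hat v_h - u_0)$ via the continuous $\phi$-equality is exactly the step the paper leaves implicit, and it is carried out correctly.
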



\section{Appendix -- An improved error estimate for the scalar $p$--Laplacian}
Consider the following scalar transmission problem for $p\geq 2$:
\begin{eqnarray}\label{TP}
-\mathrm{div}\, A'(\nabla u)&=& f \quad \text{in $\om$,} \nonumber \\
- \Delta u_c &=& 0 \quad \text{in $\omc$,} \nonumber\\
A'(u) \nu -\partial_\nu u_c&=& t_0 \quad \text{on $\pom$,} \label{diff}\\
u-u_c&=& u_0\quad \text{on $\gt$,}\nonumber
\end{eqnarray}
On $\Gamma_s$, contact conditions corresponding to Tresca friction are imposed in terms of the stress $\sigma(u) = -A'(\nabla u) \nu$,
$$
|\sigma(u)|\leq g \ , \ \sigma(u) (u_0+u_c-u)+g|u_0+u_c-u|)=0\ .
$$
A radiation condition holds for $|x| \to \infty$:
$$u(x) = a + o(1)\ ,$$
\noindent and for simplicity of notation we assume $a=0$. Here $A' : L^p(\om)^2 \to L^{p'}(\om)^2 $ is assumed to be a bounded, continuous and uniformly monotone operator, so that in particular
\begin{align*}
\langle A'(x)-A'(y), x-y \rangle &\gtrsim  \|x-y\|_{L^p(\om)}^p \ ,\\
\langle A'(x)-A'(y), z \rangle &\lesssim (\|x\|_{L^p(\om)}+ \|y\|_{L^p(\om)})^{p-2} \|x-y\|_{L^p(\om)} \|z\|_{L^p(\om)}\ ,
\end{align*}
The data belong to the following spaces:
$$f\in L^{p'}(\Omega),\,\, u_0 \in W^{\frac{1}{2},2}(\pom),\,\, t_0 \in W^{-\frac{1}{2},2}(\pom), \,\,0\leq g \in L^\infty(\gs), \,\,a \in \R.$$
In addition, $\int_\Omega f + t_0=0$. We are looking for weak solutions $(u,u_c) \in W^{1,p}(\om) \times W^{1,2}_{loc}(\omc)$.\\

The above contact problem is equivalent to the following variational inequality in the space $$X^p = W^{1,p}(\om) \times \Wgs\ , \quad \Wgs = \{u \in W^{\frac{1}{2},2}(\pom) : \mathrm{supp}\ u \subset \bar{\Gamma}_s\}\ :$$

Find $(\hat u, \hat v) \in X^p$ such that for all $(u,v)\in X^p$,
\begin{align*}
&\langle A'(\nabla \hat u), \nabla u\rangle + \langle S(\hat u|_\pom + \hat v), u|_\pom\rangle = \int_\om f u +\langle t_0+Su_0,u|_\pom \rangle = L(u,0)\ ,\\
&\langle S(\hat u|_\pom + \hat v),v-\hat v \rangle + j(v)-j(\hat v)\geq \langle t_0 + S u_0, v-\hat v  \rangle = L(0,v-\hat v)\ .
\end{align*}
We obtain a variant of Galerkin orthogonality in the interior:
\begin{align*}
&\langle A'(\nabla\hat u)- A'(\nabla\hat u_h), \nabla u_h\rangle + \langle S((\hat u-\hat u_h)|_\pom + \hat v-\hat v_h), u_h|_\pom\rangle \\
& \qquad \qquad + \langle (S-S_h)(\hat u_h|_\pom +\hat v_h-u_0), u_h|_\pom\rangle= 0\ .
\end{align*}
As in \cite{scalar}, Theorem 2, the monotony of $A'$ and coercivity of $S$ imply
\begin{align*}
\|\hat u - \hat u_h, \hat v- \hat v_h\|_{X^p}^p &\lesssim |\hat u - \hat u_h|^2_{(1,\hat u, p)} + \|(\hat u-\hat u_h)|_\pom + \hat v-\hat v_h\|_{W^{\frac{1}{2},2}(\pom)}^2\\
&\lesssim \langle A'(\nabla\hat u)- A'(\nabla\hat u_h), \nabla (\hat u - \hat u_h)\rangle \\
& \quad + \langle S((\hat u-\hat u_h)|_\pom + \hat v-\hat v_h), (\hat u-\hat u_h)|_\pom + \hat v-\hat v_h\rangle\ .
\end{align*}
Using the variational equality in $\Omega$, the right hand side becomes
\begin{align*}
&\langle A'(\nabla\hat u)- A'(\nabla\hat u_h), \nabla (\hat u - \hat u_h)\rangle \\
& \quad + \langle S((\hat u-\hat u_h)|_\pom + \hat v-\hat v_h), (\hat u-\hat u_h)|_\pom + \hat v-\hat v_h\rangle\\
&= L(\hat u - \hat u_h,0) + \langle S(\hat u|_\pom + \hat v), \hat v - \hat v_h \rangle\\
&\quad -\langle A'(\nabla\hat u_h), \nabla (\hat u - \hat u_h)\rangle - \langle S(\hat u_h|_\pom +\hat v_h), (\hat u-\hat u_h)|_\pom\rangle\\
& \quad - \langle S(\hat u_h|_\pom +\hat v_h), \hat v-\hat v_h\rangle\ .
\end{align*}
Let $u_h\in W^{1,p}_h(\Omega)$ arbitrary and $(e, \tilde e) = (\hat u- \hat u_h, \hat v - \hat v_h)$, $e_h = \hat u- u_h$, whence $e-e_h = \hat u - u_h$.
With the help of Galerkin orthogonality in $\Omega$, the right hand side turns into
\begin{align*}
& L(e-e_h, 0) - \langle S (\hat u|_\pom + \hat v ), \tilde e\rangle - \langle A'(\nabla \hat u_h),\nabla (e-e_h)\rangle
 - \langle S(\hat u_h|_\pom + \hat v_h), e-e_h\rangle\\
 & \quad  + \langle S (\hat u_h|_\pom + \hat v_h ), \tilde e\rangle + \langle (S_h-S)(\hat u_h|_\pom +\hat v_h-u_0), e_h\rangle\ .
\end{align*}
Recall that $L(e-e_h, 0) = \int_\om f (e-e_h) +\langle t_0+Su_0,(e-e_h)|_\pom \rangle$. In \cite{scalar} it was shown for a suitable interpolant $e_h = \pi e$ and any $\varepsilon>0$,
$$\int_\om f(e-e_h) \lesssim \varepsilon |e|_{(1,\hat u, p)}^2+ C(\varepsilon) \eta_f^2 + \varepsilon \eta_{gr}^2\, $$
where \begin{align*}
\eta_{gr}^2 &= \sum_{K \in \mathcal{T}_h} \int_{K}G_{p,\delta}(\nabla \hat u_h, \nabla\hat u_h - G_h \hat u_h)\ ,\\
\eta_f^2 &= \sum_{K \in \mathcal{T}_h} \int_{K} G_{p',1}(|\nabla \hat u_h|^{p-1}, h_K (f-f_K))\ ,
\end{align*}
involve the gradient recovery resp.~the approximation error of $f$. Integrating by parts in the term
$- \langle A'(\nabla \hat u_h),\nabla (e-e_h)\rangle$ yields two terms,
$$-  \sum_{l \subset \pom} \int_l  \nu\cdot A'(\nabla \hat u_h)\ (e-e_h)$$
and
$$- \sum_{l \not\subset \pom } \int_l A_l (e-e_h) \lesssim \eta_{gr}^2 + \varepsilon (|e|_{(1, \hat u_h, p)}^2 + \eta_{gr}^2)\ .$$
Altogether we conclude
\begin{align*}
\|\hat u - \hat u_h, \hat v- \hat v_h\|_{X^p}^p &\lesssim |\hat u - \hat u_h|^2_{(1,\hat u, p)} + \|(\hat u-\hat u_h)|_\pom + \hat v-\hat v_h\|_{W^{\frac{1}{2},2}(\pom)}\\
&\lesssim 2\varepsilon |e|^2_{(1,\hat u,p)}+ C(\varepsilon)\eta_f^2 + (1+2\varepsilon)\eta_{gr}^2\\
& \quad + \langle \nu \cdot A'(\nabla \hat u_h) + S(\hat u_h|_\pom + \hat v_h-u_0)-t_0, \pi e - e\rangle\\
& \quad + \langle S((\hat u-\hat u_h)|_\pom + \hat v - \hat v_h), \hat v- \hat v_h \rangle\\
& \quad  + \langle (S_h-S)(\hat u_h|_\pom +\hat v_h-u_0), \pi e\rangle\ .
\end{align*}
We write the second--to--last term as $\langle \sigma(\hat u) - \sigma(\hat u_h), \hat v- \hat v_h\rangle$ and the friction conditions as $\sigma(\hat u) = -\zeta g$, $|\hat v| = \zeta \hat v$ for some $|\zeta|\leq 1$.
Then
\begin{align*}
&\langle \sigma(\hat u) - \sigma(\hat u_h), \hat v - \hat v_{h}\rangle\\
& = -\langle \zeta g, \hat v\rangle - \langle\sigma(\hat u_h), \hat v\rangle + \langle \zeta g, \hat v_{h}\rangle + \langle \sigma(\hat u_h),\hat v_{h}\rangle\\
& = -\langle g, |\hat v|\rangle - \langle\sigma(\hat u_h),\hat v\rangle + \langle \zeta g,\hat v_{h}\rangle + \langle \sigma(\hat u_h),\hat v_{h}\rangle\\
& \leq \langle (|\sigma(\hat u_h)|-g)_+,|\hat v|\rangle + \langle \zeta g,\hat v_{h}\rangle -\langle |\sigma(\hat u_h)|, |\hat v_{h}|\rangle+\langle |\sigma(\hat u_h)|, |\hat v_{h}|\rangle+ \langle \sigma(\hat u_h),\hat v_{h}\rangle\\
& \leq \langle (|\sigma(\hat u_h)|-g)_+,|\hat v-\hat v_h| + |\hat v_h|\rangle + \langle g,|\hat v_{h}|\rangle -\langle |\sigma(\hat u_h)|, |\hat v_{h}|\rangle+\langle |\sigma(\hat u_h)|, |\hat v_{h}|\rangle+ \langle \sigma(\hat u_h),\hat v_{h}\rangle\\
& \lesssim  \|(|\sigma(\hat u_h)|-g)_+\|_{\widetilde{W}^{-\frac{1}{2},2}(\gs)}\|\hat v- \hat v_{h}\|_{W^{\frac{1}{2},2}(\gs)} \\
&\qquad  + \langle (|\sigma(\hat u_h)|-g)_+ + g-|\sigma(\hat u_h)|, |\hat v_{h}|\rangle + \langle|\sigma(\hat u_h)|, |\hat v_{h}| \rangle +\langle \sigma(\hat u_h),\hat v_{h}\rangle\\
& =  \|(|\sigma(\hat u_h)|-g)_+\|_{\widetilde{W}^{-\frac{1}{2},2}(\gs)}  \|\hat v- \hat v_{h}\|_{W^{\frac{1}{2},2}(\gs)}\\
&\qquad + \int_\gs |(|\sigma(\hat u_h)|-g)_-| |\hat v_{h}| + 2 \int_\gs (\sigma(\hat u_h)\hat v_{h})_+\ .
\end{align*}
This proves the following a posteriori estimate:
\begin{thm}
Let $f \in L^{p'}(\om)$ and denote by $(e, \tilde e)$ the
error between the Galerkin solution $(\hat u_h, \hat v_h) \in X^p_h$ and the true solution $(\hat u , \hat v) \in X^p$. Then
\begin{eqnarray*}
\|\hat u - \hat u_h, \hat v - \hat v_h\|_{X^p}^p
&\lesssim& \eta_{gr}^2 + \eta_f^2 +\eta_S^2+\eta_\partial^2+\eta_g^2 ,
\end{eqnarray*}
where
\begin{align*}
\eta_{gr}^2 &= \sum_{K \in \mathcal{T}_h} \int_{K}G_{p,\delta}(\nabla \hat u_h, \nabla\hat u_h - G_h \hat u_h),\\
\eta_f^2 &= \sum_{K \in \mathcal{T}_h} \int_{K} G_{p',1}(|\nabla \hat u_h|^{p-1}, h_K (f-f_K)),\\
\eta_S^2 &= {\mathrm{dist}_{\Wmg} \left(V^{-1}(1-K)(\hat u +\hat v- u_0), \Whmg\right)}^2\\
\eta_{\partial}^2&=\|\nu \cdot A'(\nabla \hat u_h) + S(\hat u_h|_\pom + \hat v_h-u_0)-t_0\|_{W^{-1+\frac{1}{p},p'}}^{p'}\\
\eta_{g}^2 &= \|(|\sigma(\hat u_h)|-g)_+\|_{\widetilde{W}^{-\frac{1}{2},2}(\gs)}^{p'} + \int_\gs |(|\sigma(\hat u_h)|-g)_-| |\hat v_{h}| + \int_\gs (\sigma(\hat u_h)\hat v_{h})_+\ .
\end{align*}
\end{thm}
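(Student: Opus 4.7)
My plan is to follow the same blueprint as in Section \ref{apostsection}, but in the simpler scalar setting the interior variational \emph{equality} is available, which will let me replace the generic Clement bound by the sharper gradient--recovery / data--oscillation estimators $\eta_{gr}$ and $\eta_f$ from \cite{scalar}.

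First I would reduce to a bilinear form on the error. Monotonicity of $A'$ (with $p\geq 2$) together with coercivity of $S$ gives, as in Lemma \ref{monotony},
\begin{align*}
\|\hat u-\hat u_h,\hat v-\hat v_h\|_{X^p}^{p}
&\lesssim \langle A'(\nabla \hat u)-A'(\nabla \hat u_h),\nabla(\hat u-\hat u_h)\rangle\\
&\quad+\langle S((\hat u-\hat u_h)|_\pom+\hat v-\hat v_h),(\hat u-\hat u_h)|_\pom+\hat v-\hat v_h\rangle.
\end{align*}
Using the continuous variational equality in $\om$ and then inserting an arbitrary $u_h\in W^{1,p}_h(\om)$ via the discrete Galerkin equation (the variant of Galerkin orthogonality already derived just above the theorem), the right--hand side decomposes into four packages: a volume residual $\int_\om f(\hat u-u_h)$ and an interior term $-\langle A'(\nabla\hat u_h),\nabla(\hat u-u_h)\rangle$; a boundary residual involving $t_0+Su_0-\nu\cdot A'(\nabla\hat u_h)$; a friction contribution $\langle \sigma(\hat u)-\sigma(\hat u_h),\hat v-\hat v_h\rangle$; and a consistency contribution $\langle(S_h-S)(\hat u_h|_\pom+\hat v_h-u_0),\pi e\rangle$.

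Next I would estimate the four packages. The volume residual is handled by choosing $u_h=\pi \hat u$ for a suitable (quasi--)interpolant and invoking the gradient--recovery estimator of \cite{scalar}: this produces $\eta_f^2$ and an $\varepsilon\eta_{gr}^2$ term together with an absorbable $\varepsilon|e|^2_{(1,\hat u,p)}$. Integration by parts on $-\langle A'(\nabla\hat u_h),\nabla(\hat u-u_h)\rangle$ turns the interior element contributions into jump terms bounded by $\eta_{gr}^2$ plus absorbable error, and leaves a boundary piece $-\sum_{l\subset\pom}\int_l \nu\cdot A'(\nabla \hat u_h)(e-e_h)$; combining this with the $S(\hat u_h|_\pom+\hat v_h)-t_0-Su_0$ terms and bounding the resulting residual in the $W^{-1+1/p,p'}(\pom)$ dual norm produces $\eta_\partial^2$. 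The consistency term is controlled by $\eta_S^2$ via the result of \cite{c} used in Section \ref{apostsection}.

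The main obstacle, as in the vector case, is the friction term. I would write $\sigma(\hat u)=-\zeta g$ with $|\zeta|\leq 1$ and $|\hat v|=\zeta \hat v$, reorganize
\begin{align*}
\langle \sigma(\hat u)-\sigma(\hat u_h),\hat v-\hat v_h\rangle
&= -\langle g,|\hat v|\rangle-\langle \sigma(\hat u_h),\hat v\rangle+\langle \zeta g,\hat v_h\rangle+\langle \sigma(\hat u_h),\hat v_h\rangle,
\end{align*}
add and subtract $\langle |\sigma(\hat u_h)|,|\hat v_h|\rangle$, and use $\langle(|\sigma(\hat u_h)|-g)_+,|\hat v|\rangle$ together with the triangle inequality $|\hat v|\leq |\hat v-\hat v_h|+|\hat v_h|$. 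Duality in $\widetilde W^{-\frac12,2}(\gs)$--$W^{\frac12,2}(\gs)$ applied to the first term and absorption of the resulting $\|\hat v-\hat v_h\|_{W^{1/2,2}(\gs)}^{\,2}$ into the LHS isolates the non--negative local indicators $\|(|\sigma(\hat u_h)|-g)_+\|^{p'}_{\widetilde W^{-1/2,2}(\gs)}$, $\int_\gs|(|\sigma(\hat u_h)|-g)_-||\hat v_h|$ and $\int_\gs(\sigma(\hat u_h)\hat v_h)_+$, which are exactly the three summands of $\eta_g^2$. Choosing $\varepsilon>0$ sufficiently small to absorb the $|e|^2_{(1,\hat u,p)}$ and boundary squared error terms into the left--hand side, and collecting the remaining estimators, yields the claimed bound.
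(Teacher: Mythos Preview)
Your plan is correct and follows essentially the same route as the paper: start from the monotonicity/coercivity lower bound, use the interior variational equality together with the discrete Galerkin equation to insert an interpolant, estimate the volume and edge terms by the gradient--recovery and data--oscillation estimators from \cite{scalar}, collect the boundary residual into $\eta_\partial$, bound the $S-S_h$ consistency by $\eta_S$ via \cite{c}, and treat the friction term with the $\sigma(\hat u)=-\zeta g$, $|\hat v|=\zeta\hat v$ substitution plus the add--and--subtract of $\langle|\sigma(\hat u_h)|,|\hat v_h|\rangle$ to produce exactly the three summands of $\eta_g^2$. The only cosmetic difference is that the paper keeps the intermediate quasinorm $|e|_{(1,\hat u,p)}^2$ explicit before absorbing, whereas you absorb directly into the $X^p$--norm; both are equivalent here since $p\geq 2$.
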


\begin{rem}
As $p\geq2$, we are here able to split both the discretized and the continuous variational inequality into an equation in $\Omega$ and an inequality on $\pom$. This explains the slightly different form of the frictional terms compared to Theorems 5.1 and 6.1.
\end{rem}

\end{document}